\def\vs{\vspace*}
\def\Z{\mathbb{Z}}
\def\N{\mathbb{N}}
\def\C{\mathbb{C}}
\def\C{\mathbb{C}}
\def\L{\mathfrak{L}}
\def\SUM#1#2{{\mbox{$\sum\limits_{#1}^{#2}$}}}
\numberwithin{equation}{section}
\newtheorem{theo}{Theorem}[section]
\newtheorem{defi}[theo]{Definition}
\newtheorem{lemm}[theo]{Lemma}
\newtheorem{prop}[theo]{Proposition}
\newtheorem{clai}{Claim}
\begin{document}
\sloppy{}
\baselineskip 6pt
\lineskip 6pt

\title{\bf Induced  modules and central character quotients 
for Takiff $\mathfrak{sl}_{2}$}

\author{Xiaoyu Zhu\\
{\footnotesize School of Mathematics and Statistics, Ningbo University, Ningbo, Zhejiang, 315211, China}\\
{\footnotesize E-mail: zhuxiaoyu1@nbu.edu.cn}}
\date{}
\maketitle

\noindent{{\bf Abstract.}} 
We construct a large new family of simple modules over
Takiff $\mathfrak{sl}_{2}$. We prove that the quotient of 
the universal enveloping algebra of the Takiff Lie algebra for $\mathfrak{sl}_{2}$
by the ideal generated by a non-trivial central character is a simple algebra.
In the case of the trivial central character, we show that the corresponding
ideal is primitive by explicitly constructing a simple module whose annihilator
coincides with that ideal. Together with the annihilators of simple 
$\mathfrak{sl}_{2}$-modules, we expect that the above ideals exhaust all
primitive ideal.
\vs{6pt}

\noindent{\bf Keywords:} Takiff $\mathfrak{sl}_{2}$, induced module, primitive ideal.

\noindent{\it Mathematics Subject Classification (2020):} 17B10, 17B35
\section{Introduction}

Classification and construction of modules are two  fundamental 
challenges in contemporary representation theory.
In representation theory of Lie algebras,  achieving a complete 
classification of all modules is generally considered to be a very difficult 
(perhaps even impossible) task.
It is thus natural to  narrow down the scope of the classification problem 
by focusing on "smaller" specific classes of modules.

Simple modules form one of the most natural general classes of modules.
Unfortunately, in the case of Lie algebras, 
except for the Lie algebra $\mathfrak{sl}_{2}$ (see \cite{RB,VM}),
a solution to the problem of classification of simple modules 
seems to be far out of reach for the moment. 
This motivates the study of general rough invariants of simple modules.
The annihilator of a simple module, 
which is a primitive ideal of the universal enveloping algebra,
is one of the most natural such invariants.
The primitive spectrum of the universal enveloping algebra 
for semisimple finite dimensional Lie algebras
has been extensively studied. In particular, in \cite{Du}, 
Duflo proved that every primitive ideal of a reductive Lie algebra corresponds 
to the annihilator ideal of a simple highest weight module.
An explicit classification of primitive ideals and their
inclusions in the semi-simple case was completed in 
\cite{BV1,BV2} based on the previous work by many people,
see \cite{Di,VJ} and \cite[Chapters 5 and 14]{{CJ}} for details.

For non-semi-simple Lie algebras,
the situation is much less studied than
in the case of semi-simple Lie algebras,
see e.g. \cite{BL, BL1, DB} for the results in some special  cases. 
There are several natural families of non-semi-simple Lie algebras, 
which include current Lie algebras, conformal Galilei algebras, 
Takiff Lie algebras, and others. 
Takiff Lie algebras can be seen as the "smallest" 
non-semi-simple truncated current Lie algebras, see \cite{BJW}. 
The Takiff Lie algebra for $\mathfrak{sl}_{2}$ also falls into 
the category of conformal Galilei algebras, see e.g. \cite{LMZ}, and is
defined as the semidirect product of $\mathfrak{sl}^{}_{2}$ with its adjoint representation. 

These particular Lie algebras were initially introduced by Takiff in \cite{T},
whose primary motivation was the invariant theory for such Lie algebras. 
Representation theory of Takiff Lie algebras and, especially, questions
related to highest weight modules  and BGG category $\mathcal{O}$ have 
recently attracted considerable attention, see \cite{MC,MM,MS,Zhu} and the references therein.
Our long-term project is to better understand representation  theory of  
Takiff Lie algebras, in particular, to understand primitive ideals
of the corresponding universal  enveloping algebra, for instance,
from  the point of view of Duflo's theorem, and to understand the
corresponding primitive quotients  and their representations.
In this paper, we  start by looking at this problem in the 
smallest possible  case of the Takiff Lie algebra for $\mathfrak{sl}_{2}$.

We obtain the following results: we construct a large new family of 
simple modules over Takiff $\mathfrak{sl}_{2}$. We prove that the quotient of 
the universal enveloping algebra of the Takiff Lie algebra for $\mathfrak{sl}_{2}$
by the ideal generated by a non-trivial central character is a simple algebra.
In the case of the trivial central character, we show that the corresponding
ideal is primitive by explicitly constructing a simple module whose annihilator
coincides with that ideal. Together with the annihilators of simple 
$\mathfrak{sl}_{2}$-modules, we expect that the above ideals exhaust all
primitive ideal.

This paper is organized as follows.
In Section 2, we introduce the basic setup that we work in 
and recall some necessary definitions and preliminary results.
In Section 3, we study Takiff $\mathfrak{sl}_{2}$ modules induced from 
simple highest weight $\mathfrak{sl}_{2}$-modules. 
In particular, in Proposition \ref{prop3.2} we show that generic
central  character quotients  of these induced modules are simple.
This produces a large new family of simple modules over
the Takiff $\mathfrak{sl}_{2}$. After that, in Theorem~\ref{lemma1} 
we prove that the annihilators of these simple modules  are generated 
by their central characters. In Section~4 we look at the central character
quotients of the universal enveloping algebra of the Takiff $\mathfrak{sl}_{2}$.
In Theorem~\ref{theo4.2} we show that the quotient by a
non-trivial central character is a simple algebra.
In Theorem~\ref{theo4.3} we show that the quotient by a
trivial central character is a primitive ideal.
This implies that all central character quotients of 
the universal enveloping algebra of the Takiff $\mathfrak{sl}_{2}$
are  indeed primitive ideals and we expect that,
together with the annihilators of simple 
$\mathfrak{sl}_{2}$-modules, they exhaust all primitive ideals.

\section{Preliminaries}
Throughout this paper, we denote by $\C$, $\C^{*}$, $\Z$, $\Z_{+}$ and $\N$ the
sets of complex numbers, non-zero complex numbers, integers,
non-negative integers and positive integers, respectively.
All vector spaces, in particular, all algebras and modules are defined over the field $\C$.
For an algebra $R$ and elements $x,y$ in $R$,
we denote by $(x)$ (resp. $(x,y)$) the two-sided ideal of $R$ generated by $x$ (resp. $x,y$).

Consider the Lie algebra $\mathfrak{sl}_{2}$ with the standard basis $\{e, h, f\}$ and the Lie bracket
\begin{equation*}
  [e,f]=h,\ \ \ [h,e]=2e,\ \ \ [h,f]=-2f.
\end{equation*}
Let $D:=\C[t]/(t^{2})$ be the algebra of dual numbers. 
Consider the associated {\it Takiff Lie algebra}
 $\mathfrak{L}:=\mathfrak{sl}_{2}\otimes_{\C}D$ with the Lie bracket
\begin{align*}
  [x\otimes t^{i}, y\otimes t^{j}]=[x, y]\otimes t^{i+j},
\end{align*}
where $x, y\in \mathfrak{sl}_{2}$ and $i, j\in \left\{0,1\right\}$,
and the Lie bracket on the right hand side is the usual $\mathfrak{sl}_{2}$-Lie bracket.
We identify $\mathfrak{sl}_{2}$ with the subalgebra $\mathfrak{sl}_{2}\otimes 1\subseteq \L$,
and set
\begin{align*}
  \overline{e}:=e\otimes t, \ \ \overline{f}:=f\otimes t, \ \ \overline{h}:=h\otimes t.
\end{align*}
Let $\overline{\mathfrak{g}}$ be the ideal of $\L$ generated by $\overline{e},\overline{f},\overline{h}$.
Let,  further,  $\tilde{\mathfrak{n}}_{+}$ denote the subalgebra of $\L$ generated by
$e$ and $\overline{e}$.  We denote by $\tilde{\mathfrak{h}}$
the subalgebra of $\L$ generated by $h$ and $\overline{h}$.
Finally, let $\tilde{\mathfrak{n}}_{-}$ be the subalgebra of $\L$ generated by $f, \overline{f}$.
Then we have the following triangular decomposition of $\L:$
\begin{align*}
  \L=\tilde{\mathfrak{n}}_{+}\oplus \tilde{\mathfrak{h}}\oplus \tilde{\mathfrak{n}}_{-}.
\end{align*}

For a Lie algebra $L$, we denote by $U(L)$ the corresponding universal enveloping algebra.
\begin{defi}{\rm A $\L$-module $M$ is called a} {\it weight module} {\rm provided that} $M=\sum_{\lambda\in \C} M_{\lambda}$, where
\begin{align*}
  M_{\lambda}=\left\{ v\in M\,|\, h\cdot v=\lambda v\right\}.
\end{align*}
{\rm The subspace $M_{\lambda}$ is called the} weight space {\rm of $M$ corresponding to the weight $\lambda$.}
\end{defi}
\begin{defi}{\rm Let $A$ be an associative algebra and $M$ be an $A$-module. Then the}
 {\it annihilator} {\rm Ann$_{A}(M)$ of $M$ in $A$ is defined as follows:}
\begin{align*}
  {\rm Ann}_{A}(M)=\{a\in A: a\cdot m=0\ \ \ {\rm for \ all}\ m\in M\}.
\end{align*}
{\rm An ideal $I\subseteq A$ is called} {\it primitive}
{\rm provided that $I=${\rm Ann}$_{A}(V)$, for some simple $A$-module $V$}.
\end{defi}
The center $Z(\L)$ of $U(\L)$ is the polynomial algebra generated by
the following two algebraically independent elements, see \cite{M}:
\begin{eqnarray*}
  C= \overline{h}h+2\overline{f}e+2\overline{e}f,\ \ \ \
\overline{C} = \overline{h}^{2} +4\overline{f}\overline{e}.
\end{eqnarray*}
For $\lambda\in \C$, consider the corresponding
simple highest weight $\mathfrak{sl}_{2}$-module $L(\lambda)$
with highest weight $\lambda$.
Define
\begin{eqnarray*}
  Q(\lambda):=\mbox{Ind}_{\mathfrak{sl}_{2}}^{\L}L(\lambda)
  =U(\L)\otimes_{U(\mathfrak{sl}_{2})}L(\lambda).
\end{eqnarray*}
For an algebra homomorphism $\boldsymbol{\chi}:\C\left[\, \overline{C}\, \right]\longrightarrow\C$,
set $\chi=\boldsymbol{\chi}\left(\overline{C}\right)$ and consider the quotient module
\begin{align*}
Q(\lambda,\chi):&=Q(\lambda)/\left(\overline{C}-\chi\right)Q(\lambda).
\end{align*}

\begin{prop}\label{prop1}{\rm(see\cite{K,MM})}.
Let $\chi$ and $\lambda$ be as above.
Then the following statements hold.
\begin{itemize}
\item [\rm(1)]  $Q(\lambda)\cong Q(0)\otimes_{\C} L(\lambda)$,
where we consider $L(\lambda)$ as an $\L$-module with the trivial $\overline{\mathfrak{g}}$-action,
and the tensor product is the usual tensor product of $\L$-modules.

\item [\rm(2)] $Q(\lambda,\chi)\cong Q(0,\chi)\otimes_{\C} L(\lambda)$.

\item [\rm(3)] $\displaystyle Q(0,\chi)\cong \bigoplus_{k\geq0}L(2k)$, as an $\mathfrak{sl}_{2}$-module.
\end{itemize}
\end{prop}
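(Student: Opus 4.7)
The plan is to prove the three parts in order: part (1) is an instance of the tensor identity for induced modules, part (2) is an immediate descent using how $\overline{\mathfrak{g}}$ acts on the inflated $L(\lambda)$, and part (3) follows from a Kostant-type separation of variables for the adjoint representation of $\mathfrak{sl}_{2}$.

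For (1), I would first note that $\overline{\mathfrak{g}}$ is an \emph{abelian} ideal of $\L$ (since $[x\otimes t,y\otimes t]=[x,y]\otimes t^{2}=0$) with $\L/\overline{\mathfrak{g}}\cong\mathfrak{sl}_{2}$, so $L(\lambda)$ inflates to an $\L$-module on which $\overline{\mathfrak{g}}$ acts by zero. Invoking the standard tensor identity
\[
 \mathrm{Ind}_{\mathfrak{sl}_{2}}^{\L}\bigl(\mathrm{Res}_{\mathfrak{sl}_{2}}^{\L} N\bigr)\cong \mathrm{Ind}_{\mathfrak{sl}_{2}}^{\L}(\C)\otimes_{\C}N
\]
with $N=L(\lambda)$ produces $Q(\lambda)\cong Q(0)\otimes L(\lambda)$. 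Explicitly, the $\L$-equivariant map $Q(0)\otimes L(\lambda)\to Q(\lambda)$, $(u\otimes 1)\otimes v\mapsto u\otimes v$, is bijective by the PBW decomposition $U(\L)=U(\overline{\mathfrak{g}})\otimes U(\mathfrak{sl}_{2})$.

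For (2), observe that $\overline{C}=\overline{h}^{2}+4\overline{f}\overline{e}$ lies in $U(\overline{\mathfrak{g}})$ and therefore annihilates the inflated $L(\lambda)$. Since each $\overline{x}\in\overline{\mathfrak{g}}$ acts on the tensor product by the Leibniz rule $\overline{x}(u\otimes v)=\overline{x}u\otimes v+u\otimes\overline{x}v=\overline{x}u\otimes v$, the whole subalgebra $U(\overline{\mathfrak{g}})$ acts on $Q(0)\otimes L(\lambda)$ by tensoring its action on $Q(0)$ with the identity; in particular $\overline{C}$ acts as $\overline{C}\otimes 1$. Hence the isomorphism of (1) sends $(\overline{C}-\chi)Q(\lambda)$ onto $\bigl((\overline{C}-\chi)Q(0)\bigr)\otimes L(\lambda)$, and passing to quotients yields the claim.

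For (3), PBW identifies $Q(0)$ with $U(\overline{\mathfrak{g}})=S(\overline{\mathfrak{g}})=\C[\overline{e},\overline{f},\overline{h}]$ as $\mathfrak{sl}_{2}$-module, and $\overline{\mathfrak{g}}\cong L(2)$ is the adjoint representation. A weight count gives $S^{n}(L(2))\cong \bigoplus_{0\le j\le n/2} L(2(n-2j))$, and multiplication by the invariant $\overline{C}\in S^{2}(L(2))^{\mathfrak{sl}_{2}}$ is an injective $\mathfrak{sl}_{2}$-equivariant map $S^{n}(L(2))\hookrightarrow S^{n+2}(L(2))$ whose cokernel is exactly the top summand $L(2n+4)$. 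Since $S(L(2))$ is semisimple as an $\mathfrak{sl}_{2}$-module, this yields an $\mathfrak{sl}_{2}$-stable splitting $S(L(2))\cong\C[\overline{C}]\otimes\mathcal{H}$ with $\mathcal{H}\cong\bigoplus_{k\ge 0} L(2k)$; quotienting by $(\overline{C}-\chi)$ then replaces the $\C[\overline{C}]$ factor by $\C$ and leaves $\mathcal{H}$. The main obstacle is the harmonic decomposition in (3), specifically verifying that the cokernel of multiplication by $\overline{C}$ on each symmetric power is precisely the highest-weight summand; this can be handled either by the explicit weight tally above or by citing Kostant's separation-of-variables theorem specialised to $\mathfrak{sl}_{2}$.
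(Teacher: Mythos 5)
Your argument is correct: part (1) via the tensor (projection) identity with the explicit PBW-based map, part (2) by observing that $U(\overline{\mathfrak{g}})$, hence $\overline{C}$, acts only through the first factor, and part (3) via the decomposition of $S(\overline{\mathfrak{g}})=S(L(2))$ and the multiplicity-one/cokernel computation giving the Kostant-type splitting $S(L(2))\cong\C[\,\overline{C}\,]\otimes\mathcal{H}$. The paper does not prove this proposition but quotes it from the cited references, and your route is essentially the standard argument underlying those sources, so there is nothing to flag.
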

The Verma modules for the Takiff $\mathfrak{sl}_{2}$ are studied in detail in \cite{MS}.
For a weight $\boldsymbol{\mu}\in \tilde{\mathfrak{h}}^{*}$,
we set $\mu:=\boldsymbol{\mu}(h)$ and $\overline{\mu}:=\boldsymbol{\mu}(\overline{h})$.
 Using this convention, we have the following proposition.
\begin{prop}\label{propa1}{\rm(see \cite{MS,BJW}).} The Verma $\L$-module $M(\mu,\overline{\mu})$ is simple if and only if $\overline{\mu}\neq0$.
\end{prop}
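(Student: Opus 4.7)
The plan is to analyze the action of $\tilde{\mathfrak{n}}_+$ on PBW weight vectors of $M(\mu,\overline{\mu})$ directly. I would first record two structural observations. The subalgebra $\tilde{\mathfrak{n}}_-=\C f\oplus\C\overline{f}$ is abelian (since $[f,\overline{f}]=[f,f]\otimes t=0$), and the ideal $\overline{\mathfrak{g}}$ is abelian (every bracket within it lies in $\mathfrak{sl}_{2}\otimes t^{2}=0$). Writing $v$ for the canonical generator, this makes $\{f^{i}\overline{f}^{j}v\}_{i,j\geq 0}$ a PBW basis in which $f^{i}\overline{f}^{j}v$ has weight $\mu-2(i+j)$, and it shows in particular that $\overline{h}$ acts by the scalar $\overline{\mu}$ on every $\overline{f}^{j}v$.

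For the ``only if'' direction, suppose $\overline{\mu}=0$. Then $\overline{f}v$ is a singular vector: $\overline{e}\cdot\overline{f}v=[\overline{e},\overline{f}]v=0$ and $e\cdot\overline{f}v=[e,\overline{f}]v=\overline{h}v=\overline{\mu}v=0$. Its weight $\mu-2$ is strictly less than $\mu$, so $U(\L)\overline{f}v$ is a proper nonzero submodule and $M(\mu,0)$ is not simple.

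For the ``if'' direction, assume $\overline{\mu}\neq 0$ and, for contradiction, let $N\subseteq M(\mu,\overline{\mu})$ be a nonzero proper submodule. Since $N$ is $h$-stable and $v\notin N$, there is a smallest $n\geq 1$ with $N_{\mu-2n}\neq 0$; by minimality, any nonzero $w\in N_{\mu-2n}$ is annihilated by $e$ and by $\overline{e}$. I would then write $w=\sum_{j=0}^{n}c_{j}f^{n-j}\overline{f}^{j}v$ and use the two equations $\overline{e}w=0$ and $ew=0$, together with $\overline{\mu}\neq 0$, to force $c_{0}=\cdots=c_{n}=0$, contradicting $w\neq 0$.

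The main obstacle, and really the only nontrivial computation, is the commutator identity
\[
[\overline{e},f^{k}]=k\,f^{k-1}\overline{h}-k(k-1)\,f^{k-2}\overline{f}\qquad\text{in }U(\L),
\]
which I would prove by induction on $k$ from the brackets $[\overline{e},f]=\overline{h}$, $[\overline{h},f]=-2\overline{f}$, $[\overline{f},f]=0$. Applied to $\overline{f}^{j}v$, this yields $\overline{e}f^{k}\overline{f}^{j}v=k\overline{\mu}\,f^{k-1}\overline{f}^{j}v-k(k-1)f^{k-2}\overline{f}^{j+1}v$. Reading off the coefficient of $f^{n-1-k}\overline{f}^{k}v$ in $\overline{e}w$ for $k=0,1,\ldots,n-1$ produces the triangular recursion $c_{k}(n-k)\overline{\mu}=c_{k-1}(n-k+1)(n-k)$ with initial condition $c_{0}\,n\overline{\mu}=0$; invertibility of $(n-k)\overline{\mu}$ for $k<n$ propagates $c_{0}=0$ to $c_{0}=\cdots=c_{n-1}=0$. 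Finally, since $[\overline{h},\overline{f}]=0$, one gets $e\overline{f}^{n}v=n\overline{\mu}\,\overline{f}^{n-1}v$, so $ew=0$ pins down $c_{n}=0$ as well, completing the contradiction.
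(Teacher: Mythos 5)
Your argument is correct: the singular vector $\overline{f}v$ when $\overline{\mu}=0$, and, for $\overline{\mu}\neq 0$, the reduction of a minimal-weight vector $w=\sum_j c_j f^{n-j}\overline{f}^{\,j}v$ of a putative submodule via $[\overline{e},f^{k}]=kf^{k-1}\overline{h}-k(k-1)f^{k-2}\overline{f}$ and $e\overline{f}^{\,n}v=n\overline{\mu}\,\overline{f}^{\,n-1}v$ all check out (the recursion is indeed triangular, $c_n$ drops out of $\overline{e}w$ and is killed by $ew=0$). Note, however, that the paper does not prove this proposition at all: it is quoted from \cite{MS,BJW}, where the simplicity criterion is obtained from the general highest-weight machinery for (truncated current) Takiff algebras -- in Wilson's paper essentially through the analysis of the Shapovalov-type form, whose determinant for Takiff $\mathfrak{sl}_2$ degenerates exactly when $\overline{\mu}=0$, and in Mazorchuk--S{\"o}derberg through their structural study of Verma modules in category $\mathcal{O}$ for Takiff $\mathfrak{sl}_2$. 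Your route is a direct, self-contained weight-space computation using only the two PBW variables $f,\overline{f}$ and the commutativity of $\overline{\mathfrak{g}}$; it buys elementarity and independence from the contravariant-form formalism, at the price of being special to the rank-one Takiff case, whereas the cited arguments give the criterion uniformly for all truncated current Lie algebras. The only points worth making explicit in a final write-up are the standard facts you use implicitly: $N$ decomposes into $h$-weight spaces because $h$ acts diagonalizably on $M(\mu,\overline{\mu})$, $N_{\mu}=0$ because $M(\mu,\overline{\mu})_{\mu}=\C v$ and $v\notin N$, and $U(\L)\overline{f}v$ avoids $v$ because its weights are bounded above by $\mu-2$.
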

Note that $C$ and $\overline{C}$ act on the Verma module $M({\mu},\overline{\mu})$
as the scalars $\overline{\mu}({\mu}+2)$ and $\overline{\mu}^{2}$, respectively.
In particular, in this case we have $\chi=\overline{\mu}^{2}$.
\begin{prop}\label{propa3}{\rm (see \cite{VM})}.
Let $\gamma\in \C$ and
$\mathtt{C}:=(h+1)^{2}+4fe$ be the Casimir element for $\mathfrak{sl}_{2}$.
Denote by $J_{n}$ the annihilator of the simple finite dimensional
$\mathfrak{sl}_{2}$-module $L(n)$, $n\in \Z_+$.
Then the following assertions hold.
\begin{itemize}
\item [\rm (a)] For $\gamma\in\C$,
the annihilator {\rm Ann}$_{U(\mathfrak{sl}_{2})}V(\gamma)$ of the $\mathfrak{sl}_{2}$-Verma module $V(\gamma)$
with highest weight $\gamma$
  in $U(\mathfrak{sl}_{2})$ is the two-sided ideal $I_\gamma$ of $U(\mathfrak{sl}_{2})$
  generated by the element $\mathtt{C}-(\gamma+1)^{2}$.
\item [\rm (b)] Every primitive ideal of $U(\mathfrak{sl}_{2})$ coincides with one of the ideals of $I_{\gamma}$, where $\gamma\in \C$,
    or with one of the ideals $J_{n}$, where $n\in \Z_+$. All these ideals are pairwise different.
\end{itemize}
\end{prop}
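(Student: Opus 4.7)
The plan is to treat the two parts separately, both reducing to rank-one computations in $U(\mathfrak{sl}_{2})$. For part~(a), the easy inclusion $I_{\gamma}\subseteq\mathrm{Ann}_{U(\mathfrak{sl}_{2})}V(\gamma)$ follows because $\mathtt{C}$ is central and therefore acts as a scalar on the cyclic module $V(\gamma)$; evaluating on the highest weight vector $v_{\gamma}$ gives $\mathtt{C}\cdot v_{\gamma}=\bigl((h+1)^{2}+4fe\bigr)\cdot v_{\gamma}=(\gamma+1)^{2}v_{\gamma}$, so $\mathtt{C}$ acts as $(\gamma+1)^{2}$ throughout. For the reverse inclusion I would use the PBW basis $\{f^{a}h^{b}e^{c}\}$ together with the congruence $4fe\equiv(\gamma+1)^{2}-(h+1)^{2}\pmod{I_{\gamma}}$, reducing any element modulo $I_{\gamma}$ to a linear combination of monomials in which at most one of $e,f$ appears. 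Applying such a normal form to $v_{\gamma}$ kills the monomials containing a positive power of $e$, and the surviving expression $\sum_{a,b}c_{a,b}\gamma^{b}f^{a}v_{\gamma}$ vanishes only if all $c_{a,b}=0$, by linear independence of the $f^{a}v_{\gamma}$ in $V(\gamma)$ together with a weight-space argument that separates the $h$-powers.

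For part~(b), I would invoke Duflo's theorem, which in the $\mathfrak{sl}_{2}$ case can be proved directly and reduces the problem to computing annihilators of simple highest weight modules. The simple highest weight $\mathfrak{sl}_{2}$-modules are $L(\lambda)=V(\lambda)$ for $\lambda\notin\Z_{+}$ and the finite-dimensional $L(n)$ of dimension $n+1$ for $n\in\Z_{+}$, so their annihilators fall into the two families $I_{\lambda}$ (by part~(a)) and $J_{n}$ (by definition). For pairwise distinctness, I would argue that $I_{\gamma}$ is determined by the central character value $(\gamma+1)^{2}$, and that $J_{n}$ strictly contains $I_{n}$: since $L(n)$ is the simple quotient of the infinite-dimensional Verma module $V(n)$, the element $f^{n+1}$ lies in $J_{n}$ but not in $I_{n}$.

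The main obstacle I anticipate is the reverse inclusion in part~(a). The normal-form reduction using the Casimir relation must be done carefully (one cannot simply rewrite $e^{c}$ since $f$ and $e$ do not commute), and a clean bookkeeping of the resulting coefficients, combined with the $h$-weight decomposition of $V(\gamma)$, is needed in order to separate the $c_{a,b}$. The Duflo input for part~(b) is a genuine theorem in general, but in this rank-one setting one can circumvent it by a direct analysis of simple $U(\mathfrak{sl}_{2})$-modules modulo the central character, combined with the classical structure of highest weight modules used above.
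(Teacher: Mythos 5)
This proposition is quoted in the paper from \cite{VM} without an internal proof, so the comparison is with the standard argument there; your outline follows that standard route. There is, however, one step in part~(a) that fails as literally written: testing the normal form only against the highest weight vector $v_{\gamma}$. After reducing an annihilating element $x$ modulo $I_{\gamma}$ to a combination of monomials $f^{a}h^{b}$ and $h^{b}e^{c}$, the single identity $x\cdot v_{\gamma}=0$ only yields $\sum_{b}c_{a,b}\gamma^{b}=0$ for each $a$; linear independence of the vectors $f^{a}v_{\gamma}$ cannot separate the $h$-powers, and the $e$-monomials are invisible on $v_{\gamma}$. The repair is to use the whole weight structure of $V(\gamma)$: decompose $x$ into $\mathrm{ad}_{h}$-homogeneous components (each of which again annihilates $V(\gamma)$, since weight spaces are independent), observe that a homogeneous component has normal form $q(h)f^{a}$ or $q(h)e^{c}$ for a single polynomial $q$, and evaluate on $f^{k}v_{\gamma}$ for \emph{all} $k\in\Z_{+}$. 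Since $e^{c}f^{k}v_{\gamma}$ is a nonzero multiple of $f^{k-c}v_{\gamma}$ for all but finitely many $k$ (the coefficient is $\prod_{i=0}^{c-1}(k-i)(\gamma-k+i+1)$), one concludes that $q$ vanishes at infinitely many points of the form $\gamma-2k$, hence $q=0$. You gesture at this with ``weight-space argument'' and ``$h$-weight decomposition,'' but the argument genuinely requires acting on all $f^{k}v_{\gamma}$, not just on $v_{\gamma}$; note this also covers $\gamma\in\Z_{+}$, where $V(\gamma)$ is not simple, because the offending coefficients vanish for only finitely many $k$.

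Part~(b) is fine: either Duflo's theorem or, as in \cite{VM}, the direct analysis of $U(\mathfrak{sl}_{2})/(\mathtt{C}-c)$ (simple when $c\neq(n+1)^{2}$, with a unique proper nonzero ideal corresponding to $J_{n}$ otherwise) reduces the classification to annihilators of simple highest weight modules, and your distinctness criteria are correct: $I_{\gamma}$ is determined by the scalar $(\gamma+1)^{2}$, and $f^{n+1}\in J_{n}\setminus I_{n}$ shows $I_{n}\subsetneq J_{n}$. Just be aware that $I_{\gamma}=I_{-2-\gamma}$, so the ``pairwise different'' clause must be read with the $I$-family parameterized by the central character value rather than by $\gamma$ itself.
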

\section{The module $Q(\lambda)$ and its quotients}
\subsection{Irreducibility of $Q(\lambda,\chi,\theta)$}\label{section 3.1}
For any $\theta\in \C$, consider the module
$Q(\lambda,\chi,\theta):=Q(\lambda,\chi)/(C-\theta)Q(\lambda,\chi)$.

\begin{lemm}\label{lemma2}
Assume that $\lambda\in\C\setminus\Z$.
\begin{description}
  \item[(a)] We  have $Q(\lambda,\chi,\theta)\cong\bigoplus_{k\in \Z}L(\lambda+2k)$, as an $\mathfrak{sl}_{2}$-module.
  \item[(b)] The module $Q(\lambda,\chi)$ is free as a $\C[C]$-module.
  \item[(c)] The module $Q(\lambda)$ is free as a $\C[\,C,\overline{C}\,]$-module.
\end{description}
\end{lemm}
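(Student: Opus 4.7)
My approach for all three parts runs through the tensor product decomposition $Q(\lambda,\chi)\cong Q(0,\chi)\otimes L(\lambda)$ from Proposition~\ref{prop1}(2). The key first observation is that $C$ annihilates $Q(0)$: each summand of $C=\overline{h}h+2\overline{f}e+2\overline{e}f$ has its $\mathfrak{sl}_2$-element on the right, so when applied to the cyclic generator $1\otimes 1$ of $Q(0)=U(\L)\otimes_{U(\mathfrak{sl}_2)}\C$ that $\mathfrak{sl}_2$-element moves across the balanced tensor and annihilates the trivial module; by centrality, $C=0$ on all of $Q(0)$, hence on $Q(0,\chi)$. Consequently, via the coproduct, $C$ acts on $Q(0,\chi)\otimes L(\lambda)$ as the operator
\[
T:=\overline{h}\otimes h+2\overline{f}\otimes e+2\overline{e}\otimes f.
\]

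\textbf{Isotypic decomposition and the tridiagonal form of $T$.} Combining Proposition~\ref{prop1}(3) with the Clebsch--Gordan decomposition $L(2k)\otimes V(\lambda)\cong\bigoplus_{j=-k}^{k}V(\lambda+2j)$ (which holds because, for $\lambda\notin\Z$, the Verma summands have pairwise distinct $\mathfrak{sl}_2$-Casimir eigenvalues, forcing the natural filtration to split), I obtain
\[
Q(\lambda,\chi)\cong\bigoplus_{j\in\Z}V(\lambda+2j)\otimes\C^{I_j},\qquad I_j=\{k\in\Z_+\colon k\geq|j|\}.
\]
Since $T=C$ is central and the $V(\lambda+2j)$ are pairwise non-isomorphic simple $\mathfrak{sl}_2$-modules, Schur's lemma forces $T$ to act on the $V(\lambda+2j)$-isotypic component as $1\otimes M_j$ for some $M_j\colon\C^{I_j}\to\C^{I_j}$. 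As a basis of $\C^{I_j}$ I take the highest-weight vectors $\{h_k^{(j)}\}_{k\in I_j}$, where $h_k^{(j)}$ is the h.w.v.\ of the $V(\lambda+2j)$-summand inside $L(2k)\otimes L(\lambda)$. Because $\overline{e},\overline{f},\overline{h}\in L(2)$ and $L(2)\otimes L(2k)\cong L(2k+2)\oplus L(2k)\oplus L(2k-2)$ in $Q(0,\chi)$, the operator $T$ sends $L(2k)\otimes L(\lambda)$ into the sum of $L(2k')\otimes L(\lambda)$ for $k'\in\{k-1,k,k+1\}$; hence $M_j$ is tridiagonal in the basis $\{h_k^{(j)}\}$. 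For the crucial superdiagonal coefficient, the extremal case $h_k^{(k)}=\overline{e}^k\otimes v_\lambda$ (with $v_\lambda$ the highest weight vector of $L(\lambda)$) yields, via the identity $\overline{h}\,\overline{e}^k=-\tfrac{1}{k+1}f(\overline{e}^{k+1})$ in $Q(0,\chi)$ (coming from the derivation action of $f$ on $S(\overline{\mathfrak{g}})$), the clean formula $Th_k^{(k)}=-\tfrac{1}{k+1}h_{k+1}^{(k)}$, which is nonzero; an analogous calculation should show that the superdiagonal of $M_j$ is nowhere vanishing for any admissible $j,k$.

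\textbf{Deductions and the main obstacle.} Once $M_j$ is tridiagonal with nowhere-vanishing superdiagonal, standard linear algebra gives that $M_j-\theta$ is injective with one-dimensional cokernel for every $\theta\in\C$. Part (a) follows immediately:
$Q(\lambda,\chi,\theta)\cong\bigoplus_{j\in\Z}V(\lambda+2j)\otimes(\C^{I_j}/(M_j-\theta)\C^{I_j})\cong\bigoplus_{j\in\Z}L(\lambda+2j)$
(using $V(\lambda+2j)=L(\lambda+2j)$ since $\lambda+2j\notin\Z$). For (b), each $\C^{I_j}$ is a rank-one free $\C[M_j]$-module generated by $h_{|j|}^{(j)}$ (by triangular change of basis using the shift-like structure), so $Q(\lambda,\chi)$ is $\C[C]$-free. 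For (c), I would run the parallel analysis on $Q(\lambda)=Q(0)\otimes L(\lambda)$, using in addition the Kostant-type decomposition $Q(0)\cong S(\overline{\mathfrak{g}})\cong\C[\overline{C}]\otimes H$ with $H=\bigoplus_{k\geq 0}L(2k)$ the space of harmonics, which makes $Q(0)$ free over $\C[\overline{C}]$; since $C$ and $\overline{C}$ commute and the tridiagonal argument applies uniformly in $\chi$, this upgrades to $\C[C,\overline{C}]$-freeness of $Q(\lambda)$. The main obstacle is the verification of the nonzero-superdiagonal property of $M_j$ beyond the extremal case $j=k$: for general $k>|j|$, the vector $h_k^{(j)}$ is a nontrivial linear combination $\sum_{i=0}^{k-j}c_i\,f^i\overline{e}^k\otimes f^{k-j-i}v_\lambda$, and extracting the $L(2k+2)\otimes L(\lambda)$-component of $Th_k^{(j)}$ demands careful bookkeeping of the structure constants of $\overline{e},\overline{f},\overline{h}$ on the $L(2k)$-summands of $Q(0,\chi)$ together with the Clebsch--Gordan coefficients of singular vectors in $L(2k)\otimes V(\lambda)$. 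Once that calculation is completed, all three parts reduce to routine linear algebra.
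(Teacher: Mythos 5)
Your strategy is essentially the paper's, rewritten in Clebsch--Gordan language: pass to $Q(\lambda,\chi)\cong Q(0,\chi)\otimes_{\C}L(\lambda)$, note that $C$ acts as $\overline{h}\otimes h+2\overline{f}\otimes e+2\overline{e}\otimes f$, decompose into $\mathfrak{sl}_2$-isotypic pieces with infinite multiplicity spaces, and reduce (a)--(c) to the claim that $C$ acts on each multiplicity space as a tridiagonal operator whose ``raising'' (superdiagonal) entries are all nonzero. But that claim is exactly the step you do not prove, and you say so yourself: you verify nonvanishing only in the extremal case $k=|j|$, and even there the stated identity $Th^{(k)}_k=-\tfrac{1}{k+1}h^{(k)}_{k+1}$ is not correct (one has $Th^{(k)}_k=\lambda\,\overline{h}\overline{e}^{k}\otimes v_\lambda+2\overline{e}^{k+1}\otimes fv_\lambda$, so there is a diagonal component and the superdiagonal coefficient depends on $\lambda$; the conclusion that it is nonzero is right, because the term $2\overline{e}^{k+1}\otimes fv_\lambda$ sits in a different weight space of the $L(\lambda)$-factor and cannot cancel). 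Without the nonvanishing for all $k>|j|$, the injectivity of $M_j-\theta$, its one-dimensional cokernel, and the rank-one $\C[C]$-freeness all remain unproved, so none of (a), (b), (c) is established. This is a genuine gap, and it is the heart of the lemma, not ``routine linear algebra plus bookkeeping.''

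For comparison, the paper closes precisely this point with a different bookkeeping device that avoids Clebsch--Gordan coefficients: it filters $Q(0,\chi)$ by the total degree of the ``nice'' monomials $\overline{f}^{\,i}\overline{h}^{\,\epsilon}\overline{e}^{\,j}$ (with $\epsilon\le 1$), chooses in each highest-weight multiplicity space a vector $w^{(k)}_0$ whose minimal nice-monomial degree is as small as possible, and observes that each summand of $C=\overline{h}h+2\overline{f}e+2\overline{e}f$ raises this degree by exactly one (the relation $\overline{h}^{2}=\chi-4\overline{f}\overline{e}$ keeps everything among nice monomials) with nonzero leading part; hence $\{C^m w^{(k)}_0\}_{m\ge 0}$ is a free $\C[C]$-basis of the multiplicity space. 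That is the uniform nonvanishing statement your plan is missing, and your route would need an equivalent computation (tracking the structure constants of $\overline{e},\overline{f},\overline{h}$ on the $L(2k)$-summands together with the singular vectors of $L(2k)\otimes L(\lambda)$) to go through. Finally, your part (c) is only asserted: ``this upgrades to $\C[C,\overline{C}]$-freeness'' needs an argument. The paper supplies one: $Q(\lambda)$ is free over $\C[\overline{C}]$ (PBW, plus the fact that nice monomials freely generate $U(\overline{\mathfrak{g}})$ over $\C[\overline{C}]$), and then a $\C[C]$-basis of $Q(\lambda,\chi)=Q(\lambda)/(\overline{C}-\chi)Q(\lambda)$ is lifted to a $\C[C,\overline{C}]$-basis of $Q(\lambda)$; you should spell out a comparable lifting step rather than appeal to the commutativity of $C$ and $\overline{C}$.
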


\begin{proof}
Recall from Proposition \ref{prop1} that $Q(\lambda,\chi)\cong Q(0,\chi)\otimes_\C L(\lambda)$
and that
\begin{displaymath}
Q(0,\chi)\cong \bigoplus_{k\in \Z_+}L(2k),
\end{displaymath}
when considered as an $\mathfrak{sl}_2$-module.
For $k\in \Z_+$, consider the following subspace of $Q(0,\chi)$:
\begin{displaymath}
A_{2k}:= \bigoplus_{j\in \Z_+}L(2k+2j).
\end{displaymath}
This gives the following descending filtration on $Q(0,\chi)$:
\begin{equation}\label{eq-1n}
Q(0,\chi)= A_{0}\supset A_2\supset A_4\supset\dots.
\end{equation}
Here  $A_{2k}/A_{2k+2}\cong L(2k)$,  for any $k\in\Z_+$, and also
$\displaystyle \bigcap_{k\in\Z_+}A_{2k}=\varnothing$. In fact, as
a basis of $A_{2k}$ inside $Q(0,\chi)$ we can take the set of all monomials
in $\overline{e}$, $\overline{f}$ and $\overline{h}$ of total
degree at least $k$ in which the degree of $\overline{h}$
is at  most one (we identify these monomials with their images when
applied to the generating element $1\otimes  1$ of $Q(0,\chi)$).
We will call such monomials {\em nice} in this proof.

Since we have assumed that $\lambda\in\C\setminus\Z$, we have
\begin{displaymath}
L(\lambda)\otimes_\C  L(2k)\cong
L(\lambda+2k)\oplus
L(\lambda+2k-2)\oplus\dots
\oplus L(\lambda-2k).
\end{displaymath}
This implies that, if we consider $Q(0,\chi)\otimes_\C L(\lambda)$ as an
$\mathfrak{sl}_2$-module, it decomposes into a direct sum of
$L(\lambda+2k)$, where $k\in\Z$, each appearing with infinite multiplicity.
Let us denote by $X({\lambda+2k})$ the $\mathfrak{sl}_2$-isotypic component of
$Q(\lambda,\chi)$ corresponding to $L(\lambda+2k)$. Now we can consider
inside $X({\lambda+2k})$ the weight space $X({\lambda+2k})_{\lambda+2k}$
of all highest weight vectors. It is infinite  dimensional.

Applying ${}_-\otimes_\C L(\lambda)$  to the filtration \eqref{eq-1n}
and intersecting the outcome with $X({\lambda+2k})_{\lambda+2k}$
induces, after a finite offset, a filtration
\begin{displaymath}
X({\lambda+2k})_{\lambda+2k}=B_0\supset B_2\supset B_4\supset\dots
\end{displaymath}
such that  $B_{2j}/B_{2j+2}$ has dimension one,  for any $j\in\Z_+$, and also
$\displaystyle \bigcap_{k\in\Z_+}B_{2k}=\varnothing$.
We can pick a non-zero element $w^{(k)}_0$ in $B_{0}\setminus B_{2}$.

From our description of a basis in $A_{2k}$, it follows that
$w^{(k)}_0$ will contain, with a non-zero coefficient,
an element of the form $u\otimes v$, where $u$ is a nice monomial
of total degree exactly $k$ and that $w^{(k)}_0$ does not contain
any element in which we find nice monomials of smaller degree.
Since $\overline{\mathfrak{g}}L(\lambda)=0$ and all
monomials in $C$ contain either
$\overline{e}$, $\overline{f}$ and $\overline{h}$ (with total degree one),
it follows that, applying $C$ to a basis element
$\overline{f}^{i}\overline{h}^{\epsilon}\overline{e}^{j}\otimes v$,
we will get three different terms with non-zero coefficients,
the power of $\overline{f}$,
$\overline{h}$ and $\overline{e}$ are greater by one than before, respectively.
Thus applying $C$ to $w^{(k)}_0$ shifts
this minimal degree from $k$  to $k+1$, and it is easy to see
that the outcome will  be non-zero.
Another application of $C$ will move the degree to $k+2$ and so on.
Consequently, it follows that the elements
$\{C^j w^{(k)}_0\,:\, j\in\Z_+\}$  are linearly independent.
Moreover, it also follows that each $C^j w^{(k)}_0$
gives rise to a basis in $B_{2j}/B_{2j+2}$.
This means exactly that the action of $\C[C]$ on
$X({\lambda+2k})_{\lambda+2k}$ is free (in fact, of rank one with basis $w^{(k)}_0$).

Since $C$ is central and $X({\lambda+2k})_{\lambda+2k}$
generates $X({\lambda+2k})$, it follows that
action of $\C[C]$ on
$X({\lambda+2k})$ is free.
Finally, as $k\in\Z_+$ is arbitrary, we deduce Claim~(b).

Claim~(a) follows from Claim~(b)  since $\C[C]/(C-\theta)$
has dimension one.

From the PBW Theorem, $Q(\lambda)$ is free over $U(\overline{\mathfrak{g}})$.
Clearly, nice monomials form a free basis of $U(\overline{\mathfrak{g}})$
over $\C[\overline{C}]$. Therefore $Q(\lambda)$ is free over
$\C[\overline{C}]$. Since we have defined $Q(\lambda,\chi)$
as the quotient  of $Q(\lambda)$  modulo $(\overline{C}-\chi)Q(\lambda)$
and have just proved that $Q(\lambda,\chi)$ is free over
$\C[C]$, the preimage in $Q(\lambda)$ of any basis of $Q(\lambda,\chi)$ over
$\C[C]$ will be a basis of $Q(\lambda)$ over $\C[C,\overline{C}]$.
This implies Claim~(c) and the proof is completed.
\end{proof}


\begin{prop}\label{prop3.2}
\hspace{1mm}

\begin{enumerate}[(a)]
\item  The $\L$-module $Q(\lambda,\chi,\theta)$ is simple if and only if
$\theta\neq\sqrt{\chi}(\lambda+2k)$, for any $k\in\Z$.
\item If $\chi\neq 0$ and $Q(\lambda,\chi,\theta)$ is not simple,
then it has a unique non-zero proper $\L$-submodule,  in particular,
$Q(\lambda,\chi,\theta)$ has length two.
\end{enumerate}
\end{prop}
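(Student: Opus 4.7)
The plan is to reduce simplicity of $Q(\lambda,\chi,\theta)$ to a single numerical invariant at each $\mathfrak{sl}_{2}$-isotypic level. By Lemma~\ref{lemma2}(a), the component $L(\lambda+2k)$ appears with multiplicity one for each $k\in\Z$, so its $\mathfrak{sl}_{2}$-highest weight vector $w_k$ is unique up to scalar. Since $[e,\overline{e}]=0$ and $\overline{e}$ has $h$-weight two, $\overline{e}w_k$ is again an $\mathfrak{sl}_{2}$-highest weight vector, so $\overline{e}w_k=c_kw_{k+1}$ for some $c_k\in\C$; dually I introduce $\gamma_k$ as the coefficient of $w_k$ when $\overline{f}w_{k+1}$ is expanded in the basis $\{w_k,\,fw_{k+1},\,f^{2}w_{k+2},\dots\}$ of the $(\lambda+2k)$-weight space. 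Imposing $Cw_k=\theta w_k$ and reading off coefficients forces
\[
\overline{h}w_k=\frac{\theta}{\lambda+2k+2}w_k-\frac{2c_k}{\lambda+2k+2}fw_{k+1};
\]
substituting this into $\overline{C}w_k=\chi w_k$ and extracting the $w_k$-coefficient produces, after routine simplification,
\[
c_k\gamma_k=\frac{\chi(\lambda+2k+2)^{2}-\theta^{2}}{4(\lambda+2k+3)(\lambda+2k+2)},
\]
whose denominator is nonzero because $\lambda\notin\Z$. Hence $c_k\gamma_k=0$ iff $\theta=\pm\sqrt{\chi}(\lambda+2(k+1))$, matching the resonance condition of the proposition under $m=k+1$.

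For part (a), any $\L$-submodule is $\mathfrak{sl}_{2}$-isotypic, so of the form $\bigoplus_{k\in S}L(\lambda+2k)$. The relations $\overline{e}w_k=c_kw_{k+1}$ and the fact that the $L(\lambda+2(k-1))$-component of $\overline{f}w_k$ is $\gamma_{k-1}w_{k-1}$ show that $S$ is closed under $k\mapsto k+1$ whenever $c_k\neq 0$ and under $k\mapsto k-1$ whenever $\gamma_{k-1}\neq 0$. In the non-resonant situation all $c_k,\gamma_k$ are nonzero, forcing $S=\emptyset$ or $S=\Z$, and $Q(\lambda,\chi,\theta)$ is simple. Conversely, at a resonant $k$ I split cases: if $c_k=0$, then $w_k$ is a full $\L$-highest weight vector and $U(\L)w_k$ is a proper nonzero submodule (a quotient of the corresponding $M(\lambda+2k,\overline{\mu})$); if $\gamma_k=0$, a direct stability check verifies that $\bigoplus_{j\geq k+1}L(\lambda+2j)$ is $\L$-stable, giving a proper nonzero submodule. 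Either way simplicity fails.

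For part (b), uniqueness of the resonant pair $(m,\pm)$ for $\chi\neq 0$ follows because a coincidence $\sqrt{\chi}(\lambda+2m)=-\sqrt{\chi}(\lambda+2m')$ would force $\lambda\in\Z$. The subtle point, and the main obstacle, is to exclude the degenerate scenario where both $c_k=0$ and $\gamma_k=0$ hold at the unique resonant $k=m-1$. In that scenario both $\bigoplus_{j\leq m-1}L(\lambda+2j)$ and $\bigoplus_{j\geq m}L(\lambda+2j)$ are submodules whose direct sum is $Q(\lambda,\chi,\theta)$, so the module would split. I rule this out using the generation property: $w_0=1\otimes v_\lambda$ generates $Q(\lambda,\chi,\theta)$ as a $U(\L)$-module yet lies entirely in the summand containing $L(\lambda)$ (namely the lower one if $m\geq 1$ and the upper one if $m\leq 0$), so $U(\L)w_0$ would be contained in a proper summand, a contradiction. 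Once this degenerate case is excluded, exactly one of $c_k,\gamma_k$ vanishes, and rerunning the closure argument of part (a)---all other indices being non-resonant---pins the unique nonempty proper index set $S$ to be either $\{j\leq m-1\}$ or $\{j\geq m\}$, giving the unique proper submodule and length two.
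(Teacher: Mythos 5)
Your approach is correct in substance but takes a genuinely different route from the paper's. You exploit the multiplicity-one $\mathfrak{sl}_{2}$-decomposition of Lemma~\ref{lemma2}(a) to reduce everything to the structure constants $c_k$ (the action of $\overline{e}$ on the highest weight vectors $w_k$) and $\gamma_k$ (the one-step-down component of $\overline{f}w_{k+1}$), and your product formula $c_k\gamma_k=\frac{\chi(\lambda+2k+2)^{2}-\theta^{2}}{4(\lambda+2k+2)(\lambda+2k+3)}$ checks out: $Cw_k=\theta w_k$ does force your expression for $\overline{h}w_k$, and extracting the $w_k$-coefficient of $\overline{C}w_k=\chi w_k$ gives exactly this identity. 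The index-set closure argument then yields both directions of (a), and, with uniqueness of the resonant index for $\chi\neq 0$ (using $\lambda\notin\Z$), part (b). The paper argues differently: it works with $e$- and $\overline{e}$-highest weight vectors, the universal property of $Q(\lambda,\chi,\theta)$, and homomorphisms to and from Verma modules $M(\mu,\overline{\mu})$, invoking Proposition~\ref{propa1} (simplicity of $M(\mu,\overline{\mu})$ for $\overline{\mu}\neq 0$) and a character comparison; non-simplicity in the resonant case comes from a non-injective map to $M(\mu,\overline{\mu})$ or to the kernel module $K(\mu,\chi,\theta)$, and length two comes from exhibiting the composition factors as simple highest weight modules. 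Your route is more elementary and self-contained and produces the explicit resonance formula as a byproduct; the paper's route leans on known Verma-module facts and identifies the subquotients structurally.

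One step in your part (b) needs more care: in excluding the degenerate scenario $c_{m-1}=\gamma_{m-1}=0$ you assert that the lower sum $\bigoplus_{j\leq m-1}L(\lambda+2j)$ is a submodule. Your data only control the downward component of $\overline{f}w_{j}$ (namely $\gamma_{j-1}$) and the shape of $\overline{h}w_j$; stability of the lower sum also requires that $\overline{f}w_j$ have no components above level $j$ when $c_j=0$, which you have not established. It is true: applying $e$ and using $e\overline{f}w_j=\overline{h}w_j$ together with your formula for $\overline{h}w_j$ and $\lambda\notin\Z$ gives $\overline{f}w_j=\gamma_{j-1}w_{j-1}+\frac{\theta}{(\lambda+2j)(\lambda+2j+2)}fw_j-\frac{c_j}{(\lambda+2j+1)(\lambda+2j+2)}f^{2}w_{j+1}$, so the upward component is proportional to $c_j$ and the stability check goes through. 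Alternatively, you can bypass the degenerate case entirely: your closure argument already shows that any proper nonzero submodule has index set $\{j\leq m-1\}$ or $\{j\geq m\}$, and cyclicity of $Q(\lambda,\chi,\theta)$ on $w_0$ rules out whichever of these contains $0$; together with the non-simplicity hypothesis this gives the unique proper submodule and length two without discussing whether both sums are stable. Finally, state explicitly that the standing hypothesis $\lambda\in\C\setminus\Z$ of Section~\ref{section 3.1} is in force: it is what you use for multiplicity one, for the nonvanishing denominators, and for uniqueness of the resonant index.
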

\begin{proof}
Assume first that there is a non-zero element in $Q(\lambda,\chi,\theta)$
killed by $\overline{e}$. Clearly, $e$ acts on $Q(\lambda,\chi,\theta)$
locally nilpotently. Since $e$  and $\overline{e}$ commute, it follows
that there is a non-zero $v\in Q(\lambda,\chi,\theta)$ such that
$ev=\overline{e}v=0$. Of course, we may assume that $hv=\mu v$, for some
$\mu=\lambda+2l$, where $l\in\Z$.
We have $e\overline{h}v=-2\overline{e}v=0$, which implies that
$\overline{h}v$ is a scalar multiple of $v$ as $Q(\lambda,\chi,\theta)$
contains  a  unique (up to scalar)  $e$-highest weight vector of highest
weight $\mu$.  Let $\overline{\mu}$  be such that
$\overline{h}v=\overline{\mu}v$. Then, by the universal property of
Verma modules,  we have a non-zero homomorphism from $M(\mu,\overline{\mu})$
to $Q(\lambda,\chi,\theta)$ sending the canonical generator of
$M(\mu,\overline{\mu})$ to $v$. In particular,  $\theta=\overline{\mu}(\mu+2)$
and $\chi=\overline{\mu}^2$.

Therefore, if $\theta\neq\sqrt{\chi}(\lambda+2k)$, for any $k\in\Z$,
the  element $\overline{e}$ must act injectively on $Q(\lambda,\chi,\theta)$.
Note that our assumption on $\chi$ and $\theta$  means that this
central character is not the central character of a highest weight module.
Assume $N\subset Q(\lambda,\chi,\theta)$ is a non-zero proper submodule.
Then it must contain  some $\mathfrak{sl}_2$-module $L(\lambda+2k)$.
As $\overline{e}$ acts injectively, applying it to the highest weight vector in
this $L(\lambda+2k)$ we get the highest weight vectors in all
$L(\lambda+2k+2i)$, where $i\in\Z_{+}$. Therefore $N$ contains all later
$\mathfrak{sl}_2$-submodules as well. Since $N$ is proper, there must exist
a minimal $k$ such that $N$ contains $L(\lambda+2k)$. 
Thus there is a non-zero homomorphism $\phi$ from the Verma module
$M(\lambda+2k-2,\sqrt{\chi})$ to $Q(\lambda,\chi,\theta)/N$, and
\begin{align}\label{eq3.2}
  M(\lambda+2j-2,\sqrt{\chi})/{\rm Ker\phi}\cong {\rm Im}\phi.
\end{align}
If $\chi\neq 0$, $\phi$ is an injection followed from 
Proposition \ref{propa1}. 
Thus, $$M(\lambda+2j-2,\sqrt{\chi})\cong Q(\lambda,\chi,\theta)/N,$$
as they have the same character,
which implies that $Q(\lambda,\chi,\theta)/N$ 
is a simple highest weight module.
Hence, $Q(\lambda,\chi,\theta)$ must have the central 
character of a highest weight module.
This leads to a contradiction.
If $\chi=0$, then applying $C$ on both sides of (\ref{eq3.2}), 
we get $0=\theta$, contradiction.
And in this way
we prove that $Q(\lambda,\chi,\theta)$ is simple under our assumption
that $\theta\neq\sqrt{\chi}(\lambda+2k)$, for any $k\in\Z$.

Next let us assume that $Q(\lambda,\chi,\theta)$ is not simple,
that $\chi\neq 0$, and that $N\subset Q(\lambda,\chi,\theta)$ 
is a non-zero proper submodule containing some $L(\lambda+2k)$.
Applying $\overline{e}$ to the
highest weight vector of the latter we either get that all
$L(\lambda+2k+2i)$, where $i\in\Z_{+}$, belong to $N$ or
that $N$ contains a non-zero highest weight vector
of some weight $\lambda+2k+2i$, where $i\in\Z_{+}$.

In the former case, we have a  short exact sequence
\begin{equation}\label{eq-2n}
0\to N\to  Q(\lambda,\chi,\theta)\to
Q(\lambda,\chi,\theta)/N\to 0,
\end{equation}
where $Q(\lambda,\chi,\theta)/N$ is a highest weight module.
This must be simple as $\chi\neq 0$.
In the latter case, $N$ is a simple highest weight module
by a similar argument.

In both cases, the non-highest weight subquotient is simple
by a similar argument: any proper non-zero submodule of it
necessarily gives rise to a simple highest weight
subquotient. On the one hand, the latter must be simple
as $\chi\neq 0$. On the other hand, it has finite length
as an $\mathfrak{sl}_2$-module, a contradiction.
This proves Claim~(b).

Assume now that $\theta=\sqrt{\chi}(\lambda+2k)$, for some
$k\in\N$. Set $\mu:=\lambda+2k-2$ and $\overline{\mu}=\sqrt{\chi}$.
Consider the Verma module $M(\mu,\overline{\mu})$.
It has the same central  character as $Q(\lambda,\chi,\theta)$
and it also has $L(\lambda)$ as an  $\mathfrak{sl}_2$-subquotient.
Therefore, by the universal property of
$Q(\lambda,\chi,\theta)$, there is a non-zero homomorphism
from $Q(\lambda,\chi,\theta)$ to $M(\mu,\overline{\mu})$.
This homomorphism is, clearly, not injective.
This implies that $Q(\lambda,\chi,\theta)$ is not simple.
Denote by $K(\lambda,\chi,\theta)$ the kernel of this
homomorphism.

Finally,  consider the case $\theta=\sqrt{\chi}(\lambda+2k)$, for some
$-k\in\Z_+$. Set $\mu:=\lambda+2k-2$ and $\overline{\mu}=\sqrt{\chi}$.
Consider the Verma module $M(\mu,\overline{\mu})$ and the
corresponding module $K(\mu,\chi,\theta)$ constructed in the
previous paragraph. It has the same central  character as $Q(\lambda,\chi,\theta)$
and it also has $L(\lambda)$ as an  $\mathfrak{sl}_2$-subquotient.
Therefore, by the universal property of
$Q(\lambda,\chi,\theta)$, there is a non-zero homomorphism
from $Q(\lambda,\chi,\theta)$ to $K(\mu,\chi,\theta)$.
This homomorphism is, clearly, not injective.
This implies that $Q(\lambda,\chi,\theta)$ is not simple
and completes the proof.
\end{proof}
\subsection{Annihilator of $Q(\lambda,\chi,\theta)$}\label{section 3.2}
\begin{theo}\label{lemma1}
We have ${\rm Ann}_{U(\L)}Q(\lambda)=0$. Consequently,
${\rm Ann}_{U(\L)}Q(\lambda,\chi,\theta)$ is generated by
$C-\theta$ and $\overline{C}-\chi$,
for any $\theta, \chi\in \C$.
\end{theo}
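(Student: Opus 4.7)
The plan has two steps: first I would prove $\mathrm{Ann}_{U(\L)}(Q(\lambda))=0$ by a direct PBW analysis, and then deduce the description of $\mathrm{Ann}_{U(\L)}(Q(\lambda,\chi,\theta))$ from this together with the $\C[C,\overline{C}]$-free structure of $Q(\lambda)$ supplied by Lemma~\ref{lemma2}(c).

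\textbf{Faithfulness.} Take $u\in U(\L)$ with $u\cdot Q(\lambda)=0$. The structural observation I would exploit is that $\overline{\mathfrak{g}}$ is abelian (because $t^{2}=0$ in $D$), so $U(\overline{\mathfrak{g}})\cong\C[\overline{e},\overline{f},\overline{h}]$ is polynomial, and the PBW theorem for $\L=\mathfrak{sl}_{2}\oplus\overline{\mathfrak{g}}$ yields the vector-space identifications $U(\L)\cong U(\overline{\mathfrak{g}})\otimes_{\C}U(\mathfrak{sl}_{2})$ and $Q(\lambda)\cong U(\overline{\mathfrak{g}})\otimes_{\C}L(\lambda)$. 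Writing $u=\sum_{a,b,c}\overline{f}^{a}\overline{h}^{b}\overline{e}^{c}\,w_{a,b,c}$ with $w_{a,b,c}\in U(\mathfrak{sl}_{2})$ and evaluating $u\cdot(1\otimes v)$ for $v\in L(\lambda)$, linear independence of the PBW basis of $Q(\lambda)$ forces each $w_{a,b,c}$ to lie in $\mathrm{Ann}_{U(\mathfrak{sl}_{2})}(L(\lambda))$. To upgrade this to $w_{a,b,c}=0$, I would then evaluate $u\cdot(\overline{g}\otimes v)$ for varying $\overline{g}\in U(\overline{\mathfrak{g}})$: since $w_{a,b,c}$ already annihilates $v$, only the commutator contributions involving the adjoint action of $w_{a,b,c}$ on $\overline{g}$ survive. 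Because $\mathfrak{sl}_{2}$ acts faithfully on $\overline{\mathfrak{g}}$ through the three-dimensional irreducible representation, a careful choice of $\overline{g}$ (e.g.\ low-degree monomials in $\overline{e},\overline{f},\overline{h}$) produces enough independent constraints to force every $w_{a,b,c}=0$.

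\textbf{Consequence.} The inclusion $(C-\theta,\overline{C}-\chi)\subseteq\mathrm{Ann}_{U(\L)}(Q(\lambda,\chi,\theta))$ is immediate since $C$ and $\overline{C}$ are central and act as the scalars $\theta,\chi$ on the quotient. For the reverse, take $u\in\mathrm{Ann}_{U(\L)}(Q(\lambda,\chi,\theta))$, so that $u\cdot Q(\lambda)\subseteq(C-\theta,\overline{C}-\chi)Q(\lambda)$. The plan is to find $u_{1},u_{2}\in U(\L)$ such that $\tilde u:=u-(C-\theta)u_{1}-(\overline{C}-\chi)u_{2}$ annihilates all of $Q(\lambda)$; then $\tilde u=0$ by the faithfulness step, yielding $u\in(C-\theta,\overline{C}-\chi)U(\L)$. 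To construct such $u_{1},u_{2}$, I would fix a $\C[C,\overline{C}]$-basis $\{e_{\alpha}\}$ of $Q(\lambda)$ from Lemma~\ref{lemma2}(c), write $u\cdot e_{\alpha}=\sum_{\beta}p_{\alpha\beta}(C,\overline{C})\,e_{\beta}$, observe that the hypothesis forces $p_{\alpha\beta}\in(C-\theta,\overline{C}-\chi)$, and decompose $p_{\alpha\beta}=(C-\theta)q_{\alpha\beta}+(\overline{C}-\chi)r_{\alpha\beta}$; the cyclicity of $Q(\lambda)$ together with the compatibility between the PBW structure of $U(\L)$ and the $\C[C,\overline{C}]$-basis then allows these matrix decompositions to be lifted back to elements $u_{1},u_{2}\in U(\L)$.

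\textbf{Main obstacle.} The principal technical hurdle is the commutator step in the faithfulness argument: the first-level constraint $w_{a,b,c}\in\mathrm{Ann}_{U(\mathfrak{sl}_{2})}(L(\lambda))$ is not by itself enough, since that annihilator is generally a nonzero ideal by Proposition~\ref{propa3}, and one must genuinely exploit the noncommuting interaction between $\mathfrak{sl}_{2}$ and the abelian ideal $\overline{\mathfrak{g}}$ to rule out the remaining $w_{a,b,c}$. A secondary subtlety arises in the consequence step, where one must ensure the matrix-level decomposition of $u$'s action lifts to genuine elements of $U(\L)$ rather than only to $\C[C,\overline{C}]$-linear endomorphisms of $Q(\lambda)$; this requires compatibility between the ambient PBW structure and the basis of $Q(\lambda)$ chosen in Lemma~\ref{lemma2}(c).
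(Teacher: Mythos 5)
Your plan follows the same broad outline as the paper (a PBW analysis of $U(\L)$ acting on $Q(\lambda)\cong U(\overline{\mathfrak{g}})\otimes_{\C}L(\lambda)$ for the first claim, then a reduction of the second claim to the first via the freeness statements of Lemma~\ref{lemma2}), and your diagnosis of where the difficulty lies is accurate: evaluating on vectors $1\otimes v$ only gives $w_{a,b,c}\in{\rm Ann}_{U(\mathfrak{sl}_{2})}(L(\lambda))$, which is a nonzero ideal (it contains $\mathtt{C}-(\lambda+1)^{2}$), so one must bring vectors $\overline{g}\otimes v$ with $\overline{g}\notin\C$ into play. But exactly at that point your argument stops being a proof. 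The assertion that, because $\overline{\mathfrak{g}}$ is a faithful (adjoint) $\mathfrak{sl}_{2}$-module, ``a careful choice of $\overline{g}$ produces enough independent constraints to force every $w_{a,b,c}=0$'' is the entire content of the first claim, and faithfulness of the three-dimensional representation does not by itself justify it: when you commute $w_{a,b,c}$ past $\overline{g}$, the commutator terms multiply new elements of $U(\overline{\mathfrak{g}})$ onto the prefix $\overline{f}^{a}\overline{h}^{b}\overline{e}^{c}$, and since $U(\overline{\mathfrak{g}})$ is commutative, contributions coming from \emph{different} prefixes $(a,b,c)$ land on the same PBW monomials and can cancel. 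Ruling out such cancellations requires a concrete leading-term mechanism — an ordering of PBW monomials together with test vectors chosen so that a distinguished coefficient survives. This is precisely what the paper's proof supplies: it fixes a total lexicographic order on the PBW exponents of $x$, isolates the minimal term, and evaluates at vectors $1\otimes f^{l_{0}+s}v_{\lambda}$, reducing non-vanishing to the fact that the polynomials $g_{r,r'}(h)$ have only finitely many zeros. You explicitly flag this step as the ``main obstacle'' and leave it unresolved, so the first claim is not established by your proposal.

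The second half has the same character. The inclusion $(C-\theta,\overline{C}-\chi)\subseteq{\rm Ann}_{U(\L)}Q(\lambda,\chi,\theta)$ is immediate, and decomposing the matrix entries $p_{\alpha\beta}(C,\overline{C})$ entrywise as $(C-\theta)q_{\alpha\beta}+(\overline{C}-\chi)r_{\alpha\beta}$ is unproblematic; but this only exhibits the action of $u$ as $(C-\theta)A+(\overline{C}-\chi)B$ with $A,B$ arbitrary $\C[C,\overline{C}]$-linear endomorphisms of $Q(\lambda)$. There is no reason for $A$ and $B$ to lie in the image of $U(\L)$, and that image is not known to be a $\C[C,\overline{C}]$-module direct summand of ${\rm End}_{\C[C,\overline{C}]}(Q(\lambda))$, so one cannot conclude $u\in(C-\theta)U(\L)+(\overline{C}-\chi)U(\L)$ from the matrix decomposition alone. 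You acknowledge this (``compatibility between the PBW structure and the basis'') but give no argument for it, and it is not a routine verification — for $(\theta,\chi)\neq(0,0)$ one could instead invoke the simplicity of $U(\L)/(C-\theta,\overline{C}-\chi)$ proved later in the paper, but the case $\theta=\chi=0$ genuinely needs a freeness-type argument. In short, both steps of your proposal correctly locate the difficulties of the theorem but do not overcome them; what is missing is the explicit combinatorial core (choice of order, choice of test vectors, and the cancellation analysis) on which the paper's proof rests.
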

\begin{proof}
Denote $v_{\lambda}$ the highest weight vector of $L(\lambda)$.
 Let $\{\overline{f}^{i}\overline{h}^{j}\overline{e}^{k}f^{p}h^{q}e^{l}\,|\,i,j,k,p,q,l\in \Z_{+}\}$
 be the PBW basis of $U(\L)$ and $x$ be an arbitrary non-zero element of $U(\L)$.
Then we can write  
\begin{equation*}
  x=\sum_{i,j,k,p,q,l\in \Z_{+}} c_{i,j,k}^{p,q,l}\,\overline{f}^{i}\overline{h}^{j}\overline{e}^{k}f^{p}h^{q}e^{l},\ \
  \mbox{for\ some}\ \ c_{i,j,k}^{p,q,l}\in \C.
\end{equation*}
Denote the finite and non-empty set $\{(i,j,k,p,q,l)\in \Z_{+}^{6}\,|\, c_{i,j,k}^{p,q,l}\neq 0\}$ by $P_{x}$.
Then we define a total lexicographic order on $P_{x}$ as  follows:
 \begin{eqnarray*}
   (i_{1},i_{2},i_{3},i_{4},i_{5},i_{6})> (i'_{1},i'_{2},i'_{3},i'_{4},i'_{5},i'_{6}) \Longleftrightarrow  i_{1}>i'_{1}\ \ \mbox{or} \ \ i_{1}=i'_{1},\  i_{2}>i'_{2}\  \text{ and so on.}
 \end{eqnarray*}
Let $(i_{0},j_{0},k_{0},p_{0},q_{0},l_{0})$ be the minimum element in $P_{x}$.
 Since $x$ has finitely many non-zero  terms, we can rewrite $x$ as
 \begin{equation*}
  x= \SUM{r=0}{m}\SUM{r'=0}{n}\overline{f}^{i_{0}}\overline{h}^{j_{0}}\overline{e}^{k_{0}}
  y_{r,r'}
  +\SUM{(i,j,k,p,q,l)\in P'_{x}}{}c_{i,j,k}^{p,q,l}\overline{f}^{i}\overline{h}^{j}
  \overline{e}^{k}f^{p}h^{q}e^{l},
 \end{equation*}
 where $y_{r,r'}=f^{p_{0}+r}g_{r,r'}(h)e^{l_{0}+r'}$,
 $g_{r,r'}(h)=\sum_{q_{r,r'}=0}^{n_{r,r'}}d_{q_{r,r'}}h^{q_{r,r'}}\in \C[h]$ for $r> 0$, 
 $g_{0,r'}(h)=\sum_{q_{0,r'}=q_{0}}^{n_{0,r'}}d_{q_{0,r'}}h^{q_{0,r'}}\in \C[h]$, 
 $P'_{x}=P_{x}\backslash \{(i_{0},j_{0},k_{0},p_{0}+r,q_{r,r'},l_{0}+r')\,|\,
q_{0,r'}=q_{0},\cdots,n_{0,r'},q_{r,r'}=0,\cdots,n_{r,r'}, r=1,\cdots,m, r'=0,\cdots,n\}$.
 Next we show that there is a non-zero element $u\in Q(\lambda)$ such that $x\cdot u\neq0$.
 Applying $x$ to $1\otimes f^{l_{0}} v_{\lambda}$, we see that
 \begin{align*}
  x\cdot(1\otimes f^{l_{0}} v_{\lambda})
   &=\SUM{(i,j,k,p,q,l)\in P'_{x}}{}c_{i,j,k}^{p,q,l}\overline{f}^{i}\overline{h}^{j}
  \overline{e}^{k}\otimes f^{p}h^{q}e^{l}f^{l_{0}} v_{\lambda}+
   \SUM{r=0}{m}\SUM{r'=0}{n}\overline{f}^{i_{0}}\overline{h}^{j_{0}}\overline{e}^{k_{0}}
  \otimes y_{r,r'}f^{l_{0}} v_{\lambda}\\
  &= \SUM{(i,j,k,p,q,l)\in P'_{x}}{}c_{i,j,k}^{p,q,l}\overline{f}^{i}\overline{h}^{j}
  \overline{e}^{k}\otimes f^{p}h^{q}e^{l}f^{l_{0}} v_{\lambda}+
   \SUM{r=0}{m}\overline{f}^{i_{0}}\overline{h}^{j_{0}}\overline{e}^{k_{0}}\otimes 
  f^{p_{0}+r}g_{r,0}(h)e^{l_{0}}f^{l_{0}} v_{\lambda}\\
  &=\SUM{(i,j,k,p,q,l)\in P'_{x}}{}c_{i,j,k}^{p,q,l}\overline{f}^{i}\overline{h}^{j}
  \overline{e}^{k}\otimes f^{p}h^{q}e^{l}f^{l_{0}} v_{\lambda}+
  \SUM{r=0}{m}
  \overline{f}^{i_{0}}\overline{h}^{j_{0}}\overline{e}^{k_{0}}\otimes 
   c_{\lambda,l_{0}}g_{r,0}(\lambda)f^{p_{0}+r} v_{\lambda},
 \end{align*}
 where $c_{\lambda,l_{0}}=l_{0}!\prod_{i=1}^{l_{0}}(\lambda-(l_{0}-i))$.
 Observing the elements in $P'_{x}$, we see that if $g_{r,0}(\lambda)\neq0$ for some $r$, 
 then $x\cdot(1\otimes f^{l_{0}} v_{\lambda})\neq0$, as desired.
 Otherwise, considering the action of $y_{r',r'}$ on 
 $f^{l_{0}+s} v_{\lambda}$ for $s\in \N$,
 we get that there exists $s_{0}\in \N$ such that
 $$\SUM{r'=0}{n}y_{r',r'}f^{l_{0}+s_{0}} v_{\lambda}=
 \SUM{r'=0}{n}f^{p_{0}+r'}g_{r',r'}(h)e^{l_{0}+r'}f^{l_{0}+s_{0}} v_{\lambda}\neq0,$$
 as $g_{r',r'}(h)$ has only finitely many zeros, for every $r'=0,\ldots, n$.
 Hence, $x\cdot (1\otimes f^{l_{0}+s_{0}} v_{\lambda})\neq 0$.
 It follows that ${\rm Ann}_{U(\L)}Q(\lambda)=0$ which proves the first claim
 of our theorem.

 The second claim of the Theorem follows from the first claim.
\end{proof}
\section{On primitive ideals of Takiff $\mathfrak{sl}_{2}$}
Recall that $Z(\L)=\C[C,\overline{C}]$ and $\C$ is algebraically closed.
By Dixmier-Schur Lemma, see \cite[Proposition~2.6.8]{Di}, $C,\overline{C}$ act on any simple $\L$-module as scalars.
Thus any primitive ideal of $\L$ contains $C-\theta, \overline{C}-\chi$, for some $\theta,\chi\in \C$. We denote:
\begin{itemize}
\item by $J_{n}$, where $n\in \Z_{+}$,
the annihilator in $U(\mathfrak{sl}_{2})$ of the simple finite dimensional
$\mathfrak{sl}_{2}$-module $L(n)$;
\item by  $I(\chi,\theta)$, where $\theta,\chi\in\C$,
the ideal of $U(\L)$, generated by
$C-\theta$ and $\overline{C}-\chi$;
\item by $\mathfrak{I}(\mu)$, where $\mu\in\C$, the ideal of $U(\L)$
generated by $\mathtt{C}-(\mu+1)^{2}$ and $\overline{\mathfrak{g}}$;
\item by $\mathfrak{F}(n)$, where $n\in\Z_{+}$, the ideal of $U(\L)$
generated by $J_{n}$ and $\overline{\mathfrak{g}}$.
\end{itemize}
\begin{theo}\label{theo4.2}
Let $\theta,\chi\in \C$ such that $\theta\neq0$ or $\chi\neq0$. Then the algebra
$U(\L)/(C-\theta,\overline{C}-\chi)$ is simple.
\end{theo}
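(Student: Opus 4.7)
The plan is to argue by contradiction. Set $I := (C-\theta, \overline{C}-\chi)$ and $A := U(\L)/I$, and suppose $J \subsetneq A$ is a proper non-zero two-sided ideal. By Zorn's lemma $J$ is contained in a maximal two-sided ideal $M$ of $A$; since $A/M$ is a simple unital algebra it admits a simple left module $V$ (take the quotient by a maximal left ideal). Viewed as a $U(\L)$-module, $V$ is simple with central character $(\chi,\theta)$, and its $U(\L)$-annihilator $P$ strictly contains $I$. Hence it suffices to show that when $(\chi,\theta) \neq (0,0)$, no primitive ideal of $U(\L)$ strictly contains $I(\chi,\theta)$.

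First, the ``lifted'' primitive ideals $\mathfrak{I}(\mu)$ and $\mathfrak{F}(n)$ defined just before the theorem contain $\overline{\mathfrak{g}} \cdot U(\L)$, hence both $\overline{C}$ and $C$; they correspond to central character $(0,0)$ and do not contain $I(\chi,\theta)$. To rule out any other primitive ideal $P \supsetneq I$, I would use that $P$ is stable under the adjoint $\mathfrak{sl}_2$-action on $U(\L)$. The commutative subalgebra $B := U(\overline{\mathfrak{g}})/(\overline{C}-\chi) \cong \C[\overline{e},\overline{f},\overline{h}]/(\overline{h}^2+4\overline{f}\overline{e}-\chi)$ embeds in $A$ (by PBW), and $P \cap B$ is an $\mathfrak{sl}_2$-invariant ideal of $B$. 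For $\chi \neq 0$, $\mathrm{Spec}(B)$ is a smooth affine quadric that forms a single $\mathfrak{sl}_2$-orbit (the adjoint orbit of a semisimple element), so the only $\mathfrak{sl}_2$-invariant ideals of $B$ are $\{0\}$ and $B$; since $P \neq U(\L)$, we conclude $P \cap B = \{0\}$. For $\chi = 0$, $\theta \neq 0$, $\mathrm{Spec}(B)$ is the null cone with two $\mathfrak{sl}_2$-orbits, whose only non-trivial invariant ideal is $\overline{\mathfrak{g}} \cdot B$; but $\overline{\mathfrak{g}} \subseteq P$ would force $\theta = C \in P$, so $1 \in P$ (since $\theta \neq 0$), contradicting $P \neq U(\L)$.

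The main obstacle is upgrading $P \cap B = \{0\}$ to $P = I$, since a priori $P$ could contain mixed PBW elements involving both $\overline{\mathfrak{g}}$- and $\mathfrak{sl}_2$-generators. The plan is to pick $x \in P \setminus I$, decompose it into $\mathfrak{sl}_2$-adjoint isotypic components (each lying in $P$), and apply iterated commutators with $e, h, f$ together with the relation $C = \theta$ to reduce $x$ to a non-zero element of $B$, which would contradict $P \cap B = \{0\}$. The delicate part is ensuring the reduction preserves non-zeroness: one tracks a leading term of $x$ under a suitable PBW-order and uses the faithful action of $A$ on $Q(\lambda,\chi,\theta)$ (from Proposition~\ref{prop3.2} and Theorem~\ref{lemma1}) as a checking device. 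The argument naturally splits into the cases $\chi \neq 0$ and $\chi = 0$, reflecting the different $\mathfrak{sl}_2$-orbit structure on $\mathrm{Spec}(B)$, and it is precisely here that the hypothesis $(\chi,\theta) \neq (0,0)$ is essential, since the central relations become non-trivial cancellation tools only under this hypothesis.
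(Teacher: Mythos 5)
Your overall strategy (reduce to showing no two-sided ideal strictly contains $I(\chi,\theta)$, then analyse the $\mathfrak{sl}_2$-invariant ideals of $B=U(\overline{\mathfrak{g}})/(\overline{C}-\chi)$ via the orbit geometry of the quadric) is attractive, but it leaves unproved exactly the step that constitutes the substance of the paper's proof. The crux is your "main obstacle": given a nonzero element $x$ of a proper two-sided ideal of $A=U(\L)/(C-\theta,\overline{C}-\chi)$, produce from it a nonzero element of $B$ (or a power of $\overline{e}$). Your proposed mechanism --- iterated commutators with $e,h,f$ together with the relation $C=\theta$ --- cannot accomplish this: since $[\mathfrak{sl}_2,\overline{\mathfrak{g}}]\subseteq\overline{\mathfrak{g}}$ and $[\mathfrak{sl}_2,\mathfrak{sl}_2]\subseteq\mathfrak{sl}_2$, the derivations $\mathrm{ad}_e,\mathrm{ad}_h,\mathrm{ad}_f$ never lower the PBW-degree in the unbarred generators, and the relation $C=\theta$ only trades one unbarred factor for another; so mixed monomials can never be pushed into $B$ this way. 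What is actually needed is the adjoint action of the \emph{barred} elements (e.g. $[\overline{h},e]=2\overline{e}$, $[\overline{h},f]=-2\overline{f}$, $[\overline{e},f]=\overline{h}$), which converts unbarred generators into barred ones; the paper's proof consists precisely of the explicit formulas (4.2)--(4.8), a leading-term analysis showing that applying $(\mathrm{ad}_{\overline{e}})^{p_0}(\mathrm{ad}_{\overline{h}})^{j_1}$ and then $\mathrm{ad}_e$, $\mathrm{ad}_h$ to a nonzero element yields a nonzero element $\overline{e}^{s_0}$ of the ideal, and then the descent $\overline{e}^{s_0}\leadsto\overline{e}^{s_0-1}\leadsto\cdots\leadsto 1$ using $\overline{h}^2=\chi-4\overline{f}\overline{e}$ when $\chi\neq0$, respectively $2\overline{e}f=\theta-\overline{h}h-2\overline{f}e$ when $\chi=0$, $\theta\neq0$. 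None of this "non-vanishing bookkeeping" is supplied in your proposal, and it is not a routine verification; it also requires knowing a basis of $A$ (the first half of the paper's proof), which you bypass.

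Two further points. First, the embedding $B\hookrightarrow A$ is not automatic "by PBW": PBW gives spanning, but injectivity of $U(\overline{\mathfrak{g}})/(\overline{C}-\chi)\to A$ needs an argument (in the paper it follows from the explicit basis of the quotient; citing the faithfulness of $Q(\lambda,\chi,\theta)$ via Theorem~\ref{lemma1} could substitute, but then it must be invoked for this, not just as a "checking device"). Second, in the case $\chi=0$, $\theta\neq0$ your claim that the only nontrivial invariant ideal of $B$ is $\overline{\mathfrak{g}}B$ is false (all powers of the augmentation ideal are invariant); the orbit argument only yields $\sqrt{P\cap B}\supseteq(\overline{e},\overline{f},\overline{h})$, hence some $\overline{e}^N\in P$ rather than $\overline{\mathfrak{g}}\subseteq P$, and one must still perform the descent using $C=\theta$ as in the paper's Case~2. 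So the geometric observations are a reasonable framing, but the proof of the theorem is essentially missing.
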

\begin{proof}
Consider the set
\begin{displaymath}
 A:=\{\overline{f}^{i}\overline{h}^{\epsilon}f^{j}h^{p}e^{q},\,\,\,\,
\overline{f}^{i}\overline{h}^{\epsilon}\overline{e}^{j}h^{p}e^{q}\,|\,i,j,p,q\in \Z_{+}, \epsilon=0,1\},
\end{displaymath}
we claim that this set gives a basis of $U(\L)/(C-\theta,\overline{C}-\chi)$.
Note that, in the quotient, we have
\begin{equation}\label{eq01}
  2\overline{e}f=\theta-\overline{h}h-2\overline{f}e,\ \
  \overline{h}^{2}=\chi-4\overline{f}\overline{e},
\end{equation}
which implies that $U(\L)/(C-\theta,\overline{C}-\chi)$
is generated by the elements in $A$.
In order to prove linear independence, we need do
some preparation.
\begin{clai}
For any $i,j,p,q\in \Z_{+}$, $\epsilon=0,1$, we have
\begin{eqnarray}&\!\!\!\!\!\!\!\!\!\!\!\!\!\!\!\!\!\!\!\!\!\!\!\!&
[\overline{h},\overline{f}^{i}\overline{h}f^{j}h^{p}e^{q}]
=q\theta\overline{f}^{i}\overline{h}f^{j-1}(h+2)^{p}e^{q-1}
-2(q+j)\overline{f}^{i+1}\overline{h}f^{j-1}h^{p}e^{q}
-q\chi\overline{f}^{i}f^{j-1}(h+2)^{p+1}e^{q-1}
\nonumber\\\!\!\!\!\!\!\!\!\!\!\!\!&\!\!\!\!\!\!\!\!\!\!\!\!\!\!\!&
\ \ \ \ \ \ \ \ \ \ \ \ \ \ \ \ \ \ \ \ 
+2q\theta\overline{f}^{i+1}f^{j-2}(h+2)^{p+1}e^{q-1}
-2q\overline{f}^{i+1}\overline{h}f^{j-2}(h-j+3)(h+2)^{p+1}e^{q-1}
\nonumber\\\!\!\!\!\!\!\!\!\!\!\!\!&\!\!\!\!\!\!\!\!\!\!\!\!\!\!\!&
\ \ \ \ \ \ \ \ \ \ \ \ \ \ \ \ \ \ \ \ 
-4q\overline{f}^{i+2}f^{j-2}h^{p+1}e^{q}
-4q(j-2)\overline{f}^{i+2}f^{j-3}(h-j+3)(h+2)^{p+1}e^{q-1},\label{eq4.4}\\
&\!\!\!\!\!\!\!\!\!\!\!\!\!\!\!\!\!\!\!\!\!\!\!\!&
[\overline{h},\overline{f}^{i}f^{j}h^{p}e^{q}]=
q\theta\overline{f}^{i}f^{j-1}(h+2)^{p}e^{q-1}
-2(q+j)\overline{f}^{i+1}f^{j-1}h^{p}e^{q}
\nonumber\\\!\!\!\!\!\!\!\!\!\!\!\!&\!\!\!\!\!\!\!\!\!\!\!\!\!\!\!&
\ \ \ \ \ \ \ \ \ \ \ \ \ \ \ \ \ \ \ 
-q\overline{f}^{i}\overline{h}f^{j-1}(h+2)^{p+1}e^{q-1}
-2q(j-1)\overline{f}^{i+1}f^{j-2}(h+2)^{p+1}e^{q-1},\label{eq14.4}\\
&\!\!\!\!\!\!\!\!\!\!\!\!\!\!\!\!\!\!\!\!\!\!\!\!&
[h,\overline{f}^{i}\overline{h}^{\epsilon}f^{j}h^{p}e^{q}]
=2(q-i-j)\overline{f}^{i}\overline{h}^{\epsilon}f^{j}h^{p}e^{q},\ \ \ \,
[h,\overline{f}^{i}\overline{h}^{\epsilon}\overline{e}^{j}h^{p}e^{q}]
=2(q+j-i)\overline{f}^{i}\overline{h}^{\epsilon}\overline{e}^{j}h^{p}e^{q},\label{eq114.4}\\
&\!\!\!\!\!\!\!\!\!\!\!\!\!\!\!\!\!\!\!\!\!\!\!\!&
[e, \overline{f}^{i}\overline{h}\overline{e}^{j}]=i\chi\overline{f}^{i-1}\overline{e}^{j}
-(4i+2)\overline{f}^{i}\overline{e}^{j+1},\ \ \ \ \ \ \ 
[e, \overline{f}^{i}\overline{e}^{j}]=i\overline{f}^{i-1}\overline{h}\overline{e}^{j},\label{eq4.6}\\
&\!\!\!\!\!\!\!\!\!\!\!\!\!\!\!\!\!\!\!\!\!\!\!\!&
[f,\overline{h}\overline{e}^{i}]=(4i+2)\overline{f}\overline{e}^{i}-i\chi \overline{e}^{i-1},
\ \ \ \ \ \ \ \ \ \ \ \ \ \ \ \ \,
 [f,\overline{e}^{i}]=-i\overline{h}\overline{e}^{i-1},\label{eq4.7}\\
&\!\!\!\!\!\!\!\!\!\!\!\!\!\!\!\!\!\!\!\!\!\!\!\!&
[\overline{h},\overline{f}^{i}\overline{h}^{\epsilon}\overline{e}^{j}h^{p}e^{q}]
=2q\overline{f}^{i}\overline{h}^{\epsilon}\overline{e}^{j+1}(h+2)^{p}e^{q-1},\label{eq14.5}\\
&\!\!\!\!\!\!\!\!\!\!\!\!\!\!\!\!\!\!\!\!\!\!\!\!&
[\overline{e}, \overline{f}^{i}\overline{h}^{\epsilon}\overline{e}^{j}h^{p}e^{q}]
=\overline{f}^{i}\overline{h}^{\epsilon}\overline{e}^{j+1}\left(h^{p}-(h+2)^{p}\right)e^{q}.
\label{eq4.5}
\end{eqnarray}
\end{clai}
It is easy to show, by induction, that, for any $i\in \Z_{+}$, we have
\begin{eqnarray}&\!\!\!\!\!\!\!\!\!\!\!\!\!\!\!\!\!\!\!\!\!\!\!\!&
h^{i}y=y(h+2)^{i},\ \ \ e^{i}\overline{h}=-2i\,\overline{e}e^{i-1}
+\overline{h}e^{i},\ \ \,ef^{i}=if^{i-1}(h-i+1)+f^{i}e, \label{eq4.0}\\
&\!\!\!\!\!\!\!\!\!\!\!\!\!\!\!\!\!\!\!\!\!\!\!\!&
hy^{i}=y^{i}(h+2i),\ \ \,
f^{i}\overline{h}=2i\overline{f}f^{i-1}+\overline{h}f^{i},
\ \ \ \ hf^{i}=f^{i}(h-2i),\label{eq4.1}
\end{eqnarray}
where $y=\overline{e},e$.
Then it follows from (\ref{eq01}), (\ref{eq4.0}) and (\ref{eq4.1}) that
\begin{eqnarray*}&\!\!\!\!\!\!\!\!\!\!\!\!\!\!\!\!\!\!\!\!\!\!\!\!&
[\overline{h},\overline{f}^{i}\overline{h}f^{j}h^{p}e^{q}]
=\overline{f}^{i}\overline{h}^{2}f^{j}h^{p}e^{q}
-\overline{f}^{i}\overline{h}f^{j}h^{p}e^{q}\overline{h}
=\overline{f}^{i}\overline{h}^{2}f^{j}h^{p}e^{q}
-\overline{f}^{i}\overline{h}f^{j}h^{p}(-2q\overline{e}e^{q-1}+\overline{h}e^{q})
\nonumber\\\!\!\!\!\!\!\!\!\!\!\!\!&\!\!\!\!\!\!\!\!\!\!\!\!\!\!\!&
\ \ \ \ \ \ \ \ \ \ \ \ \ \ \ \ \ \ \,
=\overline{f}^{i}\overline{h}^{2}f^{j}h^{p}e^{q}
+2q\overline{f}^{i}\overline{h}f^{j}\overline{e}(h+2)^{p}e^{q-1}
-\overline{f}^{i}\overline{h}(2j\overline{f}f^{j-1}+\overline{h}f^{j})h^{p}e^{q}
\nonumber\\\!\!\!\!\!\!\!\!\!\!\!\!&\!\!\!\!\!\!\!\!\!\!\!\!\!\!\!&
\ \ \ \ \ \ \ \ \ \ \ \ \ \ \ \ \ \ \,
=q\overline{f}^{i}\overline{h}f^{j-1}\big(\theta-\overline{h}(h+2)-2\overline{f}e\big)
(h+2)^{p}e^{q-1}-2j\overline{f}^{i+1}\overline{h}f^{j-1}h^{p}e^{q}
\nonumber\\\!\!\!\!\!\!\!\!\!\!\!\!&\!\!\!\!\!\!\!\!\!\!\!\!\!\!\!&
\ \ \ \ \ \ \ \ \ \ \ \ \ \ \ \ \ \ \,
=q\theta\overline{f}^{i}\overline{h}f^{j-1}(h+2)^{p}e^{q-1}
-q\overline{f}^{i}\overline{h}f^{j-1}\overline{h}(h+2)^{p+1}e^{q-1}
-2(q+j)\overline{f}^{i+1}\overline{h}f^{j-1}h^{p}e^{q}
\nonumber\\\!\!\!\!\!\!\!\!\!\!\!\!&\!\!\!\!\!\!\!\!\!\!\!\!\!\!\!&
\ \ \ \ \ \ \ \ \ \ \ \ \ \ \ \ \ \ \,
=q\theta\overline{f}^{i}\overline{h}f^{j-1}(h+2)^{p}e^{q-1}
-2q(j-1)\overline{f}^{i+1}\overline{h}f^{j-2}(h+2)^{p+1}e^{q-1}
\nonumber\\\!\!\!\!\!\!\!\!\!\!\!\!&\!\!\!\!\!\!\!\!\!\!\!\!\!\!\!&
\ \ \ \ \ \ \ \ \ \ \ \ \ \ \ \ \ \ \ \ \ 
-q\overline{f}^{i}(\chi-4\overline{f}\overline{e})f^{j-1}(h+2)^{p+1}e^{q-1}
-2(q+j)\overline{f}^{i+1}\overline{h}f^{j-1}h^{p}e^{q}
\nonumber\\\!\!\!\!\!\!\!\!\!\!\!\!&\!\!\!\!\!\!\!\!\!\!\!\!\!\!\!&
\ \ \ \ \ \ \ \ \ \ \ \ \ \ \ \ \ \ \,
=q\theta\overline{f}^{i}\overline{h}f^{j-1}(h+2)^{p}e^{q-1}
-2(q+j)\overline{f}^{i+1}\overline{h}f^{j-1}h^{p}e^{q}
-q\chi\overline{f}^{i}f^{j-1}(h+2)^{p+1}e^{q-1}
\nonumber\\\!\!\!\!\!\!\!\!\!\!\!\!&\!\!\!\!\!\!\!\!\!\!\!\!\!\!\!&
\ \ \ \ \ \ \ \ \ \ \ \ \ \ \ \ \ \ \ \ \ 
-2q(j-1)\overline{f}^{i+1}\overline{h}f^{j-2}(h+2)^{p+1}e^{q-1}
+2q\overline{f}^{i+1}\big(\theta-\overline{h}h-2\overline{f}e\big)
f^{j-2}(h+2)^{p+1}e^{q-1}
\nonumber\\\!\!\!\!\!\!\!\!\!\!\!\!&\!\!\!\!\!\!\!\!\!\!\!\!\!\!\!&
\ \ \ \ \ \ \ \ \ \ \ \ \ \ \ \ \ \ \,
=q\theta\overline{f}^{i}\overline{h}f^{j-1}(h+2)^{p}e^{q-1}
-2(q+j)\overline{f}^{i+1}\overline{h}f^{j-1}h^{p}e^{q}
-q\chi\overline{f}^{i}f^{j-1}(h+2)^{p+1}e^{q-1}
\nonumber\\\!\!\!\!\!\!\!\!\!\!\!\!&\!\!\!\!\!\!\!\!\!\!\!\!\!\!\!&
\ \ \ \ \ \ \ \ \ \ \ \ \ \ \ \ \ \ \ \ \ 
-2q(j-1)\overline{f}^{i+1}\overline{h}f^{j-2}(h+2)^{p+1}e^{q-1}
+2q\theta\overline{f}^{i+1}f^{j-2}(h+2)^{p+1}e^{q-1}
\nonumber\\\!\!\!\!\!\!\!\!\!\!\!\!&\!\!\!\!\!\!\!\!\!\!\!\!\!\!\!&
\ \ \ \ \ \ \ \ \ \ \ \ \ \ \ \ \ \ \ \ \ 
-2q\overline{f}^{i+1}\overline{h}f^{j-2}(h-2j+4)(h+2)^{p+1}e^{q-1}
-4q\overline{f}^{i+2}f^{j-2}h^{p+1}e^{q}
\nonumber\\\!\!\!\!\!\!\!\!\!\!\!\!&\!\!\!\!\!\!\!\!\!\!\!\!\!\!\!&
\ \ \ \ \ \ \ \ \ \ \ \ \ \ \ \ \ \ \ \ \ 
-4q(j-2)\overline{f}^{i+2}f^{j-3}(h-j+3)(h+2)^{p+1}e^{q-1}
\nonumber\\\!\!\!\!\!\!\!\!\!\!\!\!&\!\!\!\!\!\!\!\!\!\!\!\!\!\!\!&
\ \ \ \ \ \ \ \ \ \ \ \ \ \ \ \ \ \ \,
=q\theta\overline{f}^{i}\overline{h}f^{j-1}(h+2)^{p}e^{q-1}
-2(q+j)\overline{f}^{i+1}\overline{h}f^{j-1}h^{p}e^{q}
-q\chi\overline{f}^{i}f^{j-1}(h+2)^{p+1}e^{q-1}
\nonumber\\\!\!\!\!\!\!\!\!\!\!\!\!&\!\!\!\!\!\!\!\!\!\!\!\!\!\!\!&
\ \ \ \ \ \ \ \ \ \ \ \ \ \ \ \ \ \ \ \ \ 
-2q\overline{f}^{i+1}\overline{h}f^{j-2}(h-j+3)(h+2)^{p+1}e^{q-1}
+2q\theta\overline{f}^{i+1}f^{j-2}(h+2)^{p+1}e^{q-1}
\nonumber\\\!\!\!\!\!\!\!\!\!\!\!\!&\!\!\!\!\!\!\!\!\!\!\!\!\!\!\!&
\ \ \ \ \ \ \ \ \ \ \ \ \ \ \ \ \ \ \ \ \ 
-4q\overline{f}^{i+2}f^{j-2}h^{p+1}e^{q}
-4q(j-2)\overline{f}^{i+2}f^{j-3}(h-j+3)(h+2)^{p+1}e^{q-1}.
\end{eqnarray*}
This implies (\ref{eq4.4}). Similarly, using
the relations in (\ref{eq01}), (\ref{eq4.0}) and (\ref{eq4.1}),
we can get the equations in (\ref{eq14.4})-(\ref{eq4.5}).
This proves Claim~1.

Now let us prove that the elements in $A$ are linearly
independent in
$U(\L)/(C-\theta,\overline{C}-\chi)$. Assume that this is not the case.
Then there exist some complex numbers
$K^{p,q}_{i,j,\epsilon}$ and $ L^{r,l}_{m,n,\epsilon}$,
almost all but not all zero, such that
\begin{align*}
\SUM{\substack{\epsilon=0,1,\\ i,j,p,q\in\Z_{+}}}{} K^{p,q}_{i,j,\epsilon}
\overline{f}^{i}\overline{h}^{\epsilon}f^{j}h^{p}e^{q}
+\SUM{\substack{\epsilon=0,1,\\m,n,r,l\in\Z_{+}}}{} L^{r,l}_{m,n,\epsilon}
\overline{f}^{m}\overline{h}^{\epsilon}\overline{e}^{n}h^{r}e^{l}=0.
\end{align*}
Applying the inner derivation $\mbox{ad}_{h}$ on above equation, by (\ref{eq114.4}), 
we obtain \begin{align*}
\SUM{\substack{\epsilon=0,1,\\ i,j,p,q\in\Z_{+}}}{}
 K^{p,q}_{i,j,\epsilon}(q-i-j)
\overline{f}^{i}\overline{h}^{\epsilon}f^{j}h^{p}e^{q}
+\SUM{\substack{\epsilon=0,1,\\m,n,r,l\in\Z_{+}}}{} 
L^{r,l}_{m,n,\epsilon}(l+n-m)
\overline{f}^{m}\overline{h}^{\epsilon}\overline{e}^{n}h^{r}e^{l}
=0.
\end{align*}
Since $\mbox{ad}_{h}$ acts diagonal on $U(\L)$,
without loss of generality,
we may assume $q=i+j$ and $m=l+n$,
which is in accordance with the eigenvalue of $\mbox{ad}_{h}$.
Thus, we can rewrite the above in the following form
\begin{align}\label{eqe4.11}
\SUM{i'=0}{i'_{0}} \SUM{j'=0}{j'_{0}} 
\SUM{p'=0}{p'_{0}} N^{p'}_{i',j'}
\overline{f}^{i'}f^{j'}h^{p'}e^{i'+j'}
+\SUM{i=0}{i_{0}} \SUM{j=0}{j_{0}} 
\SUM{p=0}{p_{0}} K^{p}_{i,j}
\overline{f}^{i}\overline{h} f^{j}h^{p}e^{i+j}
+\SUM{\substack{\epsilon=0,1,\\n,r,l\in\Z_{+}}}{}L^{r,l}_{n,\epsilon}
\overline{f}^{l+n}\overline{h}^{\epsilon}\overline{e}^{n}h^{r}e^{l}=0,
\end{align}
where $N^{p'}_{i',j'}, K^{p}_{i,j}\in \C$, 
$N^{p_{0}'}_{i_{0}',j_{0}'}, K^{p_{0}}_{i_{0},j_{0}}\in \C^{*}$,
$L^{r,l}_{n,\epsilon}\in \C$, and almost all but not all are zero.
Denote the left hand side of the above equation by $y$.
Let $j_{1}={\rm max}\{j'_{0},j_{0}\}$ and 
suppose that
 the highest powers of $h$ which appears
in $(\mbox{ad}_{\overline{h}})^{j_{1}}(y)$ with a non-zero coefficient is $p_{0}$.
Using Formulae (\ref{eq4.4}), (\ref{eq14.4}) and (\ref{eq14.5}),
and applying the inner derivation $\mbox{ad}_{\overline{h}}$ 
to (\ref{eqe4.11}), we get
\begin{eqnarray}\label{eq41}&\!\!\!\!\!\!\!\!\!\!\!\!\!\!\!\!\!\!\!\!\!\!\!\!&
0=\SUM{i'=0}{i'_{0}} \SUM{j'=0}{j'_{0}} 
\SUM{p'=0}{p'_{0}} N^{p'}_{i',j'}
[\overline{h},\overline{f}^{i'}f^{j'}h^{p'}e^{i'+j'}]
+\SUM{i=0}{i_{0}} \SUM{j=0}{j_{0}} 
\SUM{p=0}{p_{0}} K^{p}_{i,j}
[\overline{h},\overline{f}^{i}\overline{h} f^{j}h^{p}e^{i+j}]
+\SUM{\substack{\epsilon=0,1,\\n,r,l\in\Z_{+}}}{}L^{r,l}_{n,\epsilon}
[\overline{h},\overline{f}^{l+n}\overline{h}^{\epsilon}\overline{e}^{n}h^{r}e^{l}]
\nonumber\\\!\!\!\!\!\!\!\!\!\!\!\!&\!\!\!\!\!\!\!\!\!\!\!\!\!\!\!&
\ \ 
=\SUM{i'=0}{i'_{0}} \SUM{j'=0}{j'_{0}} 
\SUM{p'=0}{p'_{0}} N^{p'}_{i',j'}
\left((i'+j')\theta\overline{f}^{i'}f^{j'-1}(h+2)^{p'}e^{i'+j'-1}
-2(i'+2j')\overline{f}^{i'+1}f^{j'-1}h^{p'}e^{i'+j'}\right)
\nonumber\\\!\!\!\!\!\!\!\!\!\!\!\!&\!\!\!\!\!\!\!\!\!\!\!\!\!\!\!&
\ \ \ \ \ 
-\SUM{i'=0}{i'_{0}} \SUM{j'=0}{j'_{0}} 
\SUM{p'=0}{p'_{0}} N^{p'}_{i',j'}
(i'+j')\left(\overline{f}^{i'}\overline{h}f^{j'-1}
+2(j'-1)\overline{f}^{i'+1}f^{j'-2}\right)(h+2)^{p'+1}e^{i'+j'-1}
\nonumber\\\!\!\!\!\!\!\!\!\!\!\!\!&\!\!\!\!\!\!\!\!\!\!\!\!\!\!\!&
\ \ \ \ \ 
+\SUM{i=0}{i_{0}} \SUM{j=0}{j_{0}} 
\SUM{p=0}{p_{0}} K^{p}_{i,j}
\left((i+j)\theta\overline{f}^{i}\overline{h}f^{j-1}(h+2)^{p}e^{i+j-1}
-2(i+2j)\overline{f}^{i+1}\overline{h}f^{j-1}h^{p}e^{i+j}\right)
\nonumber\\\!\!\!\!\!\!\!\!\!\!\!\!&\!\!\!\!\!\!\!\!\!\!\!\!\!\!\!&
\ \ \ \ \ 
-\SUM{i=0}{i_{0}} \SUM{j=0}{j_{0}} 
\SUM{p=0}{p_{0}} K^{p}_{i,j}
(i+j)\chi\overline{f}^{i}f^{j-1}(h+2)^{p+1}e^{i+j-1}
-\SUM{i=0}{i_{0}} \SUM{j=0}{j_{0}} 
\SUM{p=0}{p_{0}} K^{p}_{i,j}
4(i+j)\overline{f}^{i+2}f^{j-2}h^{p+1}e^{i+j}
\nonumber\\\!\!\!\!\!\!\!\!\!\!\!\!&\!\!\!\!\!\!\!\!\!\!\!\!\!\!\!&
\ \ \ \ \ 
-\SUM{i=0}{i_{0}} \SUM{j=0}{j_{0}} 
\SUM{p=0}{p_{0}} K^{p}_{i,j}2(i+j)
\left(\overline{f}^{i+1}\overline{h}f^{j-2}(h-j+3)
-\theta\overline{f}^{i+1}f^{j-2}\right)(h+2)^{p+1}e^{i+j-1}
\nonumber\\\!\!\!\!\!\!\!\!\!\!\!\!&\!\!\!\!\!\!\!\!\!\!\!\!\!\!\!&
\ \ \ \ \ 
-\SUM{i=0}{i_{0}} \SUM{j=0}{j_{0}} 
\SUM{p=0}{p_{0}} K^{p}_{i,j}
4(i+j)(j-2)\overline{f}^{i+2}f^{j-3}(h-j+3)(h+2)^{p+1}e^{i+j-1}
\nonumber\\\!\!\!\!\!\!\!\!\!\!\!\!&\!\!\!\!\!\!\!\!\!\!\!\!\!\!\!&
\ \ \ \ \ 
+\SUM{\substack{\epsilon=0,1,\\n,r,l\in\Z_{+}}}{}L^{r,l}_{n,\epsilon}
2l\overline{f}^{l+n}\overline{h}^{\epsilon}\overline{e}^{n+1}(h+2)^{r}e^{l-1}.
\end{eqnarray}
Consider the sum of the following terms from above
\begin{eqnarray*}&\!\!\!\!\!\!\!\!\!\!\!\!\!\!\!\!\!\!\!\!\!\!\!\!&
X:= N^{p'_{0}}_{i'_{0},j'_{0}}
\left((i'_{0}+j'_{0})\theta\overline{f}^{i'_{0}}f^{j'_{0}-1}(h+2)^{p'_{0}}e^{i'_{0}+j'_{0}-1}
-2(i'_{0}+2j'_{0})\overline{f}^{i'_{0}+1}f^{j'_{0}-1}h^{p'_{0}}e^{i'_{0}+j'_{0}}\right)
\nonumber\\\!\!\!\!\!\!\!\!\!\!\!\!&\!\!\!\!\!\!\!\!\!\!\!\!\!\!\!&
\ \ \ \ \ 
-N^{p_{0}'}_{i_{0}',j'_{0}}
(i'_{0}+j'_{0})\overline{f}^{i'_{0}}\overline{h}f^{j'_{0}-1}
(h+2)^{p'_{0}+1}e^{i'_{0}+j'_{0}-1}
\nonumber\\\!\!\!\!\!\!\!\!\!\!\!\!&\!\!\!\!\!\!\!\!\!\!\!\!\!\!\!&
\ \ \ \ \ 
+ K^{p_{0}}_{i_{0},j_{0}}
\left((i_{0}+j_{0})\theta\overline{f}^{i_{0}}\overline{h}f^{j_{0}-1}(h+2)^{p_{0}}e^{i_{0}+j_{0}-1}
-2(i_{0}+2j_{0})\overline{f}^{i_{0}+1}\overline{h}f^{j_{0}-1}h^{p_{0}}e^{i_{0}+j_{0}}\right)
\nonumber\\\!\!\!\!\!\!\!\!\!\!\!\!&\!\!\!\!\!\!\!\!\!\!\!\!\!\!\!&
\ \ \ \ \ 
-K^{p_{0}}_{i_{0},j_{0}}
(i_{0}+j_{0})\chi\overline{f}^{i_{0}}f^{j_{0}-1}(h+2)^{p_{0}+1 }e^{i_{0}+j_{0}-1}.
\end{eqnarray*}
Comparing $j'_{0}$ with $j_{0}$ and $i'_{0}$ with $i_{0}$,
we see that $X\neq 0$.
Therefore, the right hand side of (\ref{eq41}) is nonzero
and the powers of $f$ decrease at least by one.
Then, applying the inner derivation
$(\mbox{ad}_{\overline{e}})^{p_{0}}(\mbox{ad}_{\overline{h}})^{j_{1}}$ on (\ref{eqe4.11})
and using (\ref{eq4.4}), (\ref{eq14.4}), (\ref{eq14.5}) and (\ref{eq4.5}),
we see that the powers of $f$ and $h$ in $y$ are zero.
Thus, we can write$(\mbox{ad}_{\overline{e}})^{p_{0}}(\mbox{ad}_{\overline{h}})^{j_{1}}(y)$ as
\begin{align}\label{eq4.9}
\SUM{\substack{\epsilon=0,1,\\s,r\in\Z_{+}}}{} K^{\epsilon}_{r,s}\overline{f}^{r+s}
\overline{h}^{\epsilon}\overline{e}^{s}e^{r}=0,
\end{align}
where $K^{\epsilon}_{r,s}\in \C$, for $\epsilon=0,1$ and $s,r\in\Z_{+}$,
and again, almost all but not all are zero.
Let $r_{0}$ be the highest power of $e$ above.
Then, applying the inner derivation $(\mbox{ad}_{\overline{h}})^{r_{0}}$
to (\ref{eq4.9}) and using (\ref{eq14.5}), we obtain
\begin{align}\label{eq4.10}
\SUM{s\in\Z_{+}}{} 2^{r_{0}}r_{0}!K^{1}_{r_{0},s}
\overline{f}^{r_{0}+s}\overline{h}\overline{e}^{s+r_{0}}
+\SUM{s'\in\Z_{+}}{} 2^{r_{0}}r_{0}!K^{0}_{r_{0},s'}
\overline{f}^{r_{0}+s'}\overline{e}^{s'+r_{0}}=0.
\end{align}
Denote by $y_{1}$ the left hand side in (\ref{eq4.10}).
Then, according to (\ref{eq4.6}),
 there exists a positive integer $l_{0}$ such that
\begin{align*}
 0= (\mbox{ad}_{e})^{l_{0}}(y_{1})=\SUM{i=0}{n}a_{i}\overline{e}^{i},
 \ \ \ \mbox{for\ some\ \ } a_{i}\in \C, \ a_{n}\in \C^{*}.
\end{align*}
Using the action of $\mbox{ad}_h$,
it follows from (\ref{eq114.4}) that $\SUM{i=0}{n}2ia_{i}\overline{e}^{i}=0$,
which implies that $a_{n}=0$, a contradiction.
Therefore, the elements in $A$ are linearly independent
and hence give rise to a basis in $U(\L)/(C-\theta,\overline{C}-\chi)$.

Let now $\mathcal{S}$ be a non-zero ideal of $U(\L)/(C-\theta,\overline{C}-\chi)$,
and $x$ a non-zero element of $\mathcal{S}$.
We can write $x$ as a finite linear combination of the elements of
the form
$\overline{f}^{i}\overline{h}^{\epsilon}f^{j}h^{p}e^{q}$
and $\overline{f}^{i}\overline{h}^{\epsilon}\overline{e}^{j}h^{p}e^{q}$.
By a similar arguments as above,
one can find $r_{i}\in \Z_{+}$ for $i=0,1,2,3$, such that
$$x_{1}:=(\mbox{ad}_{e})^{r_{0}}(\mbox{ad}_{\overline{h}})^{r_{1}}
(\mbox{ad}_{\overline{e}})^{r_{2}}(\mbox{ad}_{\overline{h}})^{r_{3}}(x)
=\SUM{s=0}{m}c_{s}\overline{e}^{s}\in \mathcal{S},$$
where $c_{s}\in \C$ and $c_{m}\neq 0$.
Letting $\mbox{ad}_h$ act on $x_{1}$ and using the second equation in (\ref{eq114.4}),
it follows that
\begin{align*}
\mbox{ad}_{h}(x_{1})-2mx_{1}
&=\mbox{ad}_{h}\left(\SUM{s=0}{m}c_{s}\overline{e}^{s}\right)-2mx_{1}
=\SUM{s=0}{m}c_{s}(2s-2m)\overline{e}^{s}=\SUM{s=0}{m-1}c_{s}2(s-m)\overline{e}^{s}\in \mathcal{S}.
\end{align*}
Repeating this process finitely many times,
it follows that $\overline{e}^{s_{0}}\in \mathcal{S}$, for some $s_{0}\in \N$.
Note that from the second formula in (\ref{eq4.7}),
we have that $\overline{h}\overline{e}^{s_{0}-1}\in \mathcal{S}$.
Now we consider the following two cases.

\noindent{\bf Case 1:} $\chi\neq 0$.
According to (\ref{eq01}), we obtain
$$\chi \overline{e}^{s_{0}-1}=\overline{h}^{2}\overline{e}^{s_{0}-1}+4\overline{f}\overline{e}^{s_{0}}\in \mathcal{S},$$
which implies that $\overline{e}^{s_{0}-1}\in \mathcal{S}$, since $\chi\neq 0$.
Consequently, $1\in \mathcal{S}$.

\noindent{\bf Case 2:} $\chi=0$, $\theta\neq 0$.
Then, using the first formula in (\ref{eq4.7}),
we see that $\overline{f}\overline{e}^{s_{0}-1}\in \mathcal{S}$.
Using the first relation in (\ref{eq01}), we obtain
\begin{align*}
f\overline{e}^{s_{0}}
&=\frac{1}{2}\left(\theta-\overline{h}(h+2)
-2\overline{f}e\right)\overline{e}^{s_{0}-1}
=\frac{1}{2}\theta\overline{e}^{s_{0}-1}
-\frac{1}{2}\overline{h}\overline{e}^{s_{0}-1}(h+2s_{0})
-\overline{f}\overline{e}^{s_{0}-1}e,
\end{align*}
which implies that $\overline{e}^{s_{0}-1}\in \mathcal{S}$,
since $\theta\neq 0$.
Consequently, $1\in \mathcal{S}$.
\end{proof}

The algebra $U(\L)/(C,\overline{C})$ is, clearly, not simple.
For example, it is infinite dimensional but it 
has infinitely many pairwise non-isomorphic finite
dimensional simple modules $L(n)$, where $n\in\N$, and also
any simple infinite dimensional $\mathfrak{sl}_2$-module is a
simple module over this algebra and has a non-zero annihilator in it.
We can now sum up the above into the following
main result of this paper.

\begin{theo}\label{theo4.3}Let $V$ be a simple $\L$-module such that
$C$ acts on $V$ as $\theta$ and  $\overline{C}$ acts on $V$ as $\chi$,
for some $\theta,\chi\in \C$.
\begin{description}
\item[(a)] If $\theta\neq0$ or $\chi\neq0$, then ${\rm Ann}_{U(\L)}V=I(\chi,\theta)$.
\item[(b)] If $\theta=\chi=0$ and $V$ is simple as an $\mathfrak{sl}_{2}$-module,
           then ${\rm Ann}_{U(\L)}V$ is either $\mathfrak{I}(\mu)$,
           for some $\mu\in\C$, or $\mathfrak{F}(n)$, for some $n\in\Z_+$.
\item[(c)] $I(0,0)$ is a primitive ideal of $U(\L)$.
\end{description}
\end{theo}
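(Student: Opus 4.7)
The plan is to treat the three items of the theorem separately. Part (a) is immediate from Theorem~\ref{theo4.2}: since $C$ and $\overline{C}$ act on $V$ as $\theta$ and $\chi$, we have $I(\chi,\theta)\subseteq{\rm Ann}_{U(\L)}V$; the simplicity of $U(\L)/I(\chi,\theta)$ under the hypothesis $(\theta,\chi)\neq(0,0)$ then forces ${\rm Ann}_{U(\L)}V$ to equal $I(\chi,\theta)$, as any ideal containing $I(\chi,\theta)$ must be either $I(\chi,\theta)$ itself or the whole algebra, and $V\neq 0$ rules out the latter.

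For part (b), the key reduction is to show $\overline{\mathfrak{g}}\cdot V=0$. Since $\overline{\mathfrak{g}}$ is an ideal of $\L$, the subspace $\overline{\mathfrak{g}}\cdot V\subseteq V$ is $\mathfrak{sl}_{2}$-stable, so by $\mathfrak{sl}_{2}$-simplicity it is either $0$ or $V$. Assume the latter. The action map $\overline{\mathfrak{g}}\otimes V\to V$ is then a non-zero $\mathfrak{sl}_{2}$-equivariant map; since $\overline{\mathfrak{g}}\cong L(2)$ as an $\mathfrak{sl}_{2}$-module, the Clebsch--Gordan decomposition of $L(2)\otimes V$ combined with Schur's lemma forces this map to be a non-zero scalar multiple $c$ of the canonical pairing $\overline{x}\otimes v\mapsto x\cdot v$ (the case $V=L(0)$ is immediate because ${\rm Hom}_{\mathfrak{sl}_{2}}(L(2),L(0))=0$). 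The abelian relation $[\overline{e},\overline{f}]=0$ in $\L$ then translates to $c^{2}h$ acting as zero on $V$; since $h$ acts non-trivially on any simple $\mathfrak{sl}_{2}$-module other than $L(0)$, we obtain $c=0$, a contradiction. With $\overline{\mathfrak{g}}\cdot V=0$ established, the PBW decomposition $U(\L)=U(\mathfrak{sl}_{2})\oplus U(\L)\overline{\mathfrak{g}}$ shows that ${\rm Ann}_{U(\L)}V$ is the two-sided ideal of $U(\L)$ generated by $\overline{\mathfrak{g}}$ together with ${\rm Ann}_{U(\mathfrak{sl}_{2})}V$. Proposition~\ref{propa3}(b) identifies the latter as either $I_{\mu}$ or $J_{n}$, yielding $\mathfrak{I}(\mu)$ or $\mathfrak{F}(n)$ respectively.

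For part (c), the task is to exhibit a simple $\L$-module with annihilator exactly $I(0,0)$. By Theorem~\ref{lemma1}, ${\rm Ann}_{U(\L)}Q(\lambda,0,0)=I(0,0)$ for every $\lambda\in\C\setminus\Z$, so any simple subquotient $V$ of $Q(\lambda,0,0)$ satisfies ${\rm Ann}_{U(\L)}V\supseteq I(0,0)$. By part (b), a subquotient with $\overline{\mathfrak{g}}\cdot V=0$ would have annihilator $\mathfrak{I}(\mu)$ or $\mathfrak{F}(n)$, both of which strictly contain $I(0,0)$; so the desired $V$ must satisfy $\overline{\mathfrak{g}}\cdot V\neq 0$. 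Using the decomposition $Q(\lambda,0,0)\cong\bigoplus_{k\in\Z}L(\lambda+2k)$ from Lemma~\ref{lemma2} together with the fact that $\overline{e}$ and $\overline{f}$ shift the index $k$ by $\pm 1$ and that $\overline{C}=0$ imposes $\overline{h}^{2}=-4\overline{f}\overline{e}$, one locates such a simple subquotient $V$. The main obstacle is then verifying that ${\rm Ann}_{U(\L)}V$ does not strictly contain $I(0,0)$: the algebra $U(\L)/I(0,0)$ is not simple, as remarked in the paragraph preceding Theorem~\ref{theo4.3}, so an analogue of the argument in (a) is unavailable. Instead, one must show faithfulness of the $U(\L)/I(0,0)$-action on $V$ directly, by adapting the PBW-style basis analysis of $U(\L)/(C-\theta,\overline{C}-\chi)$ from the proof of Theorem~\ref{theo4.2} to the case $\theta=\chi=0$ and checking, via an explicit element-by-element computation using iterated inner derivations, that no basis element annihilates all of $V$.
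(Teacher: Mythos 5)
Your part (a) is the same one-line argument as the paper's (it follows from Theorem~\ref{theo4.2}), and your reduction of part (b) to showing $\overline{\mathfrak{g}}\cdot V=0$ is a sensible reading of the hypothesis. However, the step in (b) where you claim that ``Clebsch--Gordan plus Schur'' forces the action map $L(2)\otimes V\to V$ to be a scalar multiple of the canonical pairing is not justified: $V$ here is an arbitrary simple $\mathfrak{sl}_{2}$-module (possibly a torsion-free or Whittaker-type module from Block's classification), and what you actually need is $\dim\mathrm{Hom}_{\mathfrak{sl}_{2}}(L(2)\otimes V,V)\leq 1$, i.e.\ that $L(2)$ occurs at most once in the locally finite part of $\mathrm{End}_{\C}(V)$ under the adjoint action. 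Clebsch--Gordan gives this for finite-dimensional or highest weight $V$, but for general simple $V$ it is a nontrivial multiplicity-one statement that you neither prove nor cite; without it the contradiction $c^{2}h=0$ does not rule out a non-canonical equivariant action of $\overline{\mathfrak{g}}$.

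The essential gap is part (c), which in your write-up is asserted rather than proved. The paper does something quite different: it induces from a one-dimensional character $C_{a,b,c}$ of $\overline{\mathfrak{g}}$ with $b^{2}+4ac=0$, passes to the quotient $N_{a,b,c}=M(a,b,c)/(bh+2ae+2cf)M(a,b,c)$, proves simplicity of $N_{a,b,c}$ by an explicit computation in $U(f,h)$, and then shows ${\rm Ann}_{U(\L)}N_{a,b,c}=I(0,0)$ by checking that the annihilator meets the subalgebra generated by $f,h,\overline{f},\overline{h}$ trivially and invoking a Gelfand--Kirillov dimension comparison. Your plan via a simple subquotient $V$ of $Q(\lambda,0,0)$ leaves every hard point open: (i) you do not exhibit a simple subquotient with $\overline{\mathfrak{g}}V\neq 0$, and the paper gives you no control of the submodule structure of $Q(\lambda,\chi,\theta)$ when $\chi=0$ (Proposition~\ref{prop3.2}(b) requires $\chi\neq 0$; the visible highest-weight subquotients at $\chi=0$ have trivial $\overline{\mathfrak{g}}$-action and hence annihilator strictly larger than $I(0,0)$); (ii) knowing ${\rm Ann}_{U(\L)}V\supseteq I(0,0)$ and that it is not of the form $\mathfrak{I}(\mu)$ or $\mathfrak{F}(n)$ does not force equality, since $U(\L)/I(0,0)$ is not simple and the assertion that no other primitive ideals lie above $I(0,0)$ is exactly the paper's open conjecture, so it cannot be used; and (iii) the ``explicit element-by-element computation using iterated inner derivations'' you defer to is the entire content of the claim, and the computations of Theorem~\ref{theo4.2} do not transfer verbatim because their final step uses $\theta\neq 0$ or $\chi\neq 0$ in an essential way. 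So as written, (c) is not established; you would need either to carry out a faithfulness argument for a concretely identified $V$, or to switch to a construction like the paper's $N_{a,b,c}$ together with its GK-dimension argument.
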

\begin{proof}
Claim~(a) follows directly from Theorem~\ref{theo4.2}.
Claim~(b) follows from Proposition \ref{propa3} and the definitions.

To prove Claim~(c), we need to construct a simple
$\L$-module with annihilator $I(0,0)$.
For $a,b,c\in \C$, denote by $C_{a,b,c}$
the one-dimensional $\overline{\mathfrak{g}}$-module
such that $\overline{f}\cdot 1=a$, $\overline{h}\cdot 1=b$, $\overline{e}\cdot 1=c$.
Define $M(a,b,c):={\rm Ind}_{\overline{\mathfrak{g}}}^{\L}C_{a,b,c}
=U(\L)\otimes_{U(\overline{\mathfrak{g}})}C_{a,b,c}$.
Then $M(a,b,c)$ is a $U(\L)$-module.
If we take $b^{2}+4ac=0$, then $\overline{C}\cdot M(a,b,c)=0$.
Let $N_{a,b,c}:= M(a,b,c)/(bh+2ae+2cf)M(a,b,c)$.
Then $C\cdot N_{a,b,c}=0$.

Assume that $b^{2}+4ac=0$ but at least one of $a$, $b$ and $c$ is
not zero. Then we claim that $N_{a,b,c}$ is a simple
$U(\L)$-module with annihilator $I(0,0)$. This certainly
implies Claim~(c). Note that, as at least one of $a$, $b$ and $c$ is
not zero, ${\rm Ann}_{U(\L)}N_{a,b,c}$ cannot coincide
with $\mathfrak{I}(\mu)$ or $\mathfrak{F}(n)$.

Now we give the proof of the claim.
Noting that, by construction, we have $M(a,b,c)\cong 
U(\mathfrak{sl_{2}})$ as vector spaces. 
By our assumptions, we see that $a\neq0$, $b\neq0$, $c\neq0$,
which implies that $e1_{N}=-\frac{1}{2a}(bh+2cf)1_{N}$,
where $1_{N}$ is the generator of $N_{a,b,c}$.
Thus, we have $N_{a,b,c}\cong U(f,h)$ as vector spaces,
where $U(f,h)$ is the subalgebra of $U(\mathfrak{sl_{2}})$ generated by $f,h$.
Let $W$ be a non-zero subalgebra of $U(f,h)$ and $0\neq w\in W$.
Then we can write $$x=\SUM{i=0}{m}\SUM{j=0}{n}d_{i,j}h^{i}f^{j}$$
for some $d_{i,j}\in \C$ and $d_{m,n}\neq0$. 
Applying $(\overline{h}-b)^{n}$ to $x$, we obtain
\begin{align*}
(\overline{h}-b)^{n}x &=\SUM{i=0}{m}\SUM{j=0}{n}d_{i,j}(\overline{h}-b)^{n}h^{i}f^{j}
=\SUM{i=0}{m}\SUM{j=0}{n}d_{i,j}h^{i}(\overline{h}-b)^{n}f^{j} \\
&=\SUM{i=0}{m}\SUM{j=1}{n}d_{i,j}h^{i}(\overline{h}-b)^{n-1}(-2jaf^{j-1}
+bf^{j}-bf^{j}) \\
&=\SUM{i=0}{m}\SUM{j=1}{n}(-2a)d_{i,j}jh^{i}(\overline{h}-b)^{n-1}f^{j-1}\\
&=\SUM{i=0}{m}(-2a)^{n}d_{i,n}n!h^{i}\in W.
\end{align*}
Then, applying $\left(\overline{f}-a\right)$ to $\left(\overline{h}-b\right)^{n}x$, 
we get
\begin{align*}
\left(\overline{f}-a\right)(\overline{h}-b)^{n}x 
&=\SUM{i=0}{m}(-2a)^{n}d_{i,n}n!\left(\overline{f}-a\right)h^{i}
=\SUM{i=0}{m}(-2a)^{n}d_{i,n}n!a((h+2)^{i}-h^{i})\in W.
\end{align*}
From the above, we see that the degree of 
the element $\left(\overline{f}-a\right)(\overline{h}-b)^{n}x$
decreased by one compared to the degree of $(\overline{h}-b)^{n}x$.
Hence, applying $\left(\overline{f}-a\right)^{m}$ on $(\overline{h}-b)^{n}x$,
we get $1_{N}\in W$, which implies that $U(f,h)$ is simple,
and then $N_{a,b,c}$ is simple.

Let $I$ be the annihilator of $N_{a,b,c}$, 
and $J$ be the ideal of $U(\L)$ generated by $C,\overline{C}$.
Denote by $\mathbf{S}$ the subalgebra of $U(\L)$
generated by $f,h,\overline{f},\overline{h}$.
Then $J\subseteq I$, $\mathbf{S}=U(f,h)\otimes \C[\overline{f}-a,\overline{h}-b]$.
Applying $(\overline{f}-a)^{i}$ to $h^{j}$ and $(\overline{h}-a)^{i}$ to $f^{j}$, 
respectively, we get
\begin{align}
 \left(\overline{f}-a\right)^{i}h^{j} &=\left\{
                             \begin{array}{ll}
                                 (2a)^{i}i!,  & \hbox{$i=j$,} \\
                                  0, & \hbox{$i>j$,}
                           \end{array}
                              \right. \label{eq4.11}\\
  \left(\overline{h}-b\right)^{i}f^{j} &=\left\{
                                \begin{array}{ll}
                                  (-2a)^{i}i!, & \hbox{$i=j$,} \\
                                  0, & \hbox{$i>j$.}
                                \end{array}
                             \right. \label{eq4.12}
\end{align}
Let $\mathbf{s}$ be a linear combination of elements in $\mathbf{S}$,
which we write as 
$$\mathbf{s}=\SUM{i=0}{m}\SUM{j=0}{n}q_{i,j}(f,h)\otimes 
\big(\overline{f}-a\big)^{i}\big(\overline{h}-b\big)^{j}.$$
Then we  define a total lexicographical order on $\Z_{+}^{2}$ as follows:
$$(i,j)>(i',j')\Leftrightarrow i> i'\ \ {\rm or\ \ }i=i', j>j'.$$
Thus, there exists the minimal element in the set 
$$\{(i,j)\,|\,q_{i,j}(f,h)\neq 0;\,\,i=0,\cdots, m, j=0,\cdots,n\}.$$
We denote it by $(i_{0},j_{0})$.
It follows from (\ref{eq4.11}) and (\ref{eq4.12}) that 
\begin{align*}
\mathbf{s}\left(f^{i_{0}}h^{j_{0}}\right)
=\SUM{i=0}{m}\SUM{j=0}{n}q_{i,j}(f,h)\otimes 
\big(\overline{f}-a\big)^{i}\big(\overline{h}-b\big)^{j}\left(f^{j_{0}}h^{i_{0}}\right)
=(-1)^{j_{0}}(2a)^{i_{0}+j_{0}}j_{0}!i_{0}!q_{i_{0},j_{0}}(f,h), 
\end{align*}
which implies that $I \cap \mathbf{S}=\{0\}$. 
Note that the Gelfand-Kirillov (GK) dimension of $U(\L)$ is 6,
 the quotient $U(\L)/J$ is a domain and $U(\L)$ is 
free as a module over its center (see \cite{FO,G,G2}).
Thus the GK dimension of $U(\L)/J$ is 4. 
If $J\neq I$, it follows that the GK dimension of $U(\L)/I$ is at most 3, 
which contradicts $I \cap \mathbf{S}=\{0\}$.
Therefore, $J= I$. This completes the proof.
\end{proof}
In Theorem \ref{theo4.3}, we described some primitive ideals of $U(\L)$.
We make the following conjecture:

\noindent{\bf Conjecture}.
Every primitive ideal of $U(\L)$ coincides with either
$\mathfrak{I}(\mu)$, for $\mu\in \C$, or $\mathfrak{F}(n)$, where
$n\in \Z_{+}$, or $I(\chi,\theta)$, where $\chi,\theta\in \C$.

\section*{Acknowledgments}
The author is very grateful to Professor Volodymyr Mazorchuk for his
stimulating discussions.

 \end{document}